\newtheorem{lemma}{Lemma}[section]
\newtheorem{theorem}[lemma]{Theorem}
\newtheorem{corollary}[lemma]{Corollary}
\newtheorem{proposition}[lemma]{Proposition}
\theoremstyle{definition}
\newtheorem{definition}[lemma]{Definition}
\newtheorem{remark}[lemma]{Remark}
\newtheorem{example}[lemma]{Example}
\newcounter{item-counter}
\theoremstyle{remark}
\newtheorem*{remark*}{Remark}
\newtheorem*{note*}{Note}
\newcommand{\Spec}{\operatorname{Spec}}
\newcommand{\Hom}{\operatorname{Hom}}
\newcommand{\pt}{\operatorname{pt}}
\newcommand{\Lie}{\operatorname{Lie}}
\newcommand{\mc}[1]{\mathcal{#1}}
\newcommand{\s}{{\sigma}}
\newcommand{\CC}{\mathbb{C}}
\newcommand{\ZZ}{\mathbb{Z}}
\newcommand{\RR}{\mathbb{R}}
\newcommand{\QQ}{\mathbb{Q}}
\newcommand{\mbf}[1]{{\mathbf{#1}}} 
\newcommand{\ts}[1]{{\mc{X}(\mathbf{#1})}} 
\begin{document}

\title[Integration on Artin Toric Stacks and Euler characteristics]{Integration on Artin Toric Stacks and Euler characteristics}

\author{Dan Edidin}
\address{Department of Mathematics, University of Missouri-Columbia, Columbia, Missouri 65211}
\email{edidind@missouri.edu}

\author{Yogesh More}
\address{Department of Mathematics, SUNY College at Old Westbury, Old Westbury, NY 11568}
\email{yogeshmore80@gmail.com}
\subjclass[2010]{Primary 14M25, 14C15, 14D23}
\thanks{The first author was partially supported by NSA grant 
H98230-08-1-0059 while preparing this article.}

\begin{abstract}
  
  There is a well-developed intersection theory on smooth Artin stacks with quasi-affine diagonal
  \cite{Gil:84,Vis:89,EdGr:98, Kre:99}. However, for Artin stacks whose diagonal is
  not quasi-finite the notion of the degree of a Chow cycle is not
  defined. In this paper we propose a definition for the degree of a
  cycle on Artin toric stacks whose underlying toric varieties are
  complete.  As an application we define the Euler characteristic of an Artin toric stack with complete good moduli space - extending the definition of the
  orbifold Euler characteristic. An explicit combinatorial formula is
  given for 3-dimensional Artin toric stacks.
\end{abstract}

\maketitle

\section{Introduction}

Let ${\mathcal X}$ be complete Deligne-Mumford stack. There is a well-developed intersection theory on such stacks \cite{Vis:89, EdGr:98,
  Kre:99} which includes the notion of the degree of 0-cycle. Over an
algebraically closed field of characteristic 0 if $x$ is a point of
${\mathcal X}$ with stabilizer $G_x$ then the degree of the 0-cycle
$[x] \in A_0({\mathcal X})$ is $1/|G_x|$. As a consequence one can
compute the integral of an algebraic cohomology class on a complete
Deligne-Mumford stack. This theory of integration plays a crucial
role in Gromov-Witten, Donaldson-Thomas and other invariants in modern
algebraic geometry.

A natural problem is to extend the notion of degree to stacks with
non-finite diagonal. While such stacks are not separated (and hence
cannot be proper over the ground field) there is a class of Artin
stacks which in many ways behave like complete Deligne-Mumford
stacks. Specifically we consider Artin stacks which are generically
Deligne-Mumford and have a complete good moduli space (in the sense of
\cite{Alp:08}). If ${\mathcal X}$ is such a stack then there is a
well-defined pushforward of Grothendieck groups $p_*\colon K({\mathcal
  X}) \to K(pt) = \ZZ$ thereby defining  Euler characteristics of
sheaves on such stacks. Thus it is natural to expect there to be a corresponding
pushforward for algebraic cycles as well as a Riemann-Roch theorem
relating the $K$-theoretic and the Chow-theoretic pushforward via some
analogue of the Chern character.

The purpose of the present paper is to take a first step in this
direction by defining the degree of a 0-cycle on a toric Artin stack
in the sense of \cite{BCS:05}.  Our definition is based on our
previous paper \cite{EdMo:10} where we showed that if ${\mathcal
  X}({\mathbf \Sigma})$ is a toric Artin stack with complete good moduli space
$X(\Sigma)$ then there is a canonical birational morphism of toric stacks
${\mathcal X}({\mathbf \Sigma'}) \stackrel{f} \to {\mathcal X}({\mathbf \Sigma})$ such
that ${\mathcal X}({\mathbf \Sigma'})$ is Deligne-Mumford and such that the
induced morphism of the underlying toric varieties is proper and birational. The degree of 
a 0-cycle on ${\mathcal X}({\mathbf \Sigma})$ is then defined as the degree of its
pullback to the Deligne-Mumford stack ${\mathcal X}(\Sigma')$.

As an application we define {\em the stacky Euler characteristic} 
of an Artin toric stack
to be the degree of the Euler class of the tangent bundle stack,
and we give a
combinatorial formula for this Euler characteristic in dimension 3.

Along the way we prove that the integral Chow ring of an Artin toric
stack equals the Stanley--Reisner ring of the associated fan. Iwanari \cite{Iwa:09} proves this for Deligne-Mumford toric stacks; our proof is different.

\section{Toric stacks and Stanley--Reisner rings}

In this section, we recall the definition of toric stacks and in the process establish the notation we will use for toric stacks. As in \cite{Iwa:09}, for simplicity we will restrict our attention to stacky fans for which $N$ is torsion free. A {\em stacky fan}
${\mathbf\Sigma}=(N, \Sigma, \{v_1, \ldots, v_{n}\})$ consists of:
\begin{list}{\arabic{item-counter}.}{\usecounter{item-counter}}
\item  a finitely generated free abelian group $N=\ZZ^d$,
\item a (not neccesarily simplicial) fan $\Sigma \subset N_\QQ$, whose rays we will denote by $\rho_1, \ldots, \rho_n$,
\item for every $1 \leq i \leq n$, an element $v_i \in N$ such that $v_i$ lies on the ray $\rho_i$.
\end{list}

\subsection{Toric stacks via the Cox construction}\label{subsec:Cox-constr}
We now recall the construction \cite{BCS:05} of the toric stack
$\mc{X}(\mathbf{\Sigma})$ associated to a stacky fan
${\mathbf\Sigma}=(N, \Sigma, \{v_1, \ldots, v_{n}\})$. Let
$S=\CC[X_\rho \mid \rho \in \Sigma(1)]$, where $\Sigma(1)$ denotes the
rays in $\Sigma$. Define the ideal (of S) $B(\Sigma)=(
X_{\widehat{\s}} \mid \sigma \in \Sigma)S$, where
$X_{\widehat{\s}}=\prod_{\rho \in \Sigma(1) \smallsetminus \s}
X_{\rho}$.  The Cox space $C(\Sigma)$ is defined to be $\Spec S
\smallsetminus {V}(B(\Sigma))$, where ${V}(B(\Sigma))$ is the
vanishing locus of the ideal $B(\Sigma)$. Let $$G({\mathbf \Sigma})=\{
(t_\rho)_{\rho \in \Sigma(1)} \in (\CC^*)^n \mid \prod_{\rho \in
  \Sigma(1)} t_\rho^{\langle m, v_\rho \rangle}=1 \ \ \forall m\in M
\},$$ where $M=\Hom(N, \ZZ)$ and $\langle \cdot, \cdot \rangle: M
\times N \to \ZZ$ is the natural pairing. The group $G({\mathbf
  \Sigma})$ acts on $C(\Sigma)$ by restricting the natural action of
$(\CC^*)^n$ on $C(\Sigma) \subseteq \CC^n=\Spec S$. The {\em toric stack}
$\mc{X}(\mathbf{\Sigma})$ associated to the stacky fan
${\mathbf\Sigma}=(N, \Sigma, \{v_1, \ldots, v_{n}\})$ is the quotient
stack $[C(\Sigma)/G(\mathbf{\Sigma})]$.

\subsection{Chow rings of Artin toric stacks}

Our first result is a description of the Chow ring of an Artin toric stack in terms of the combinatorics of its stacky fan. 

\begin{definition}\cite[Definition
  2.1]{Iwa:09}\label{def:stanley-reisner-ring}
  Given a stacky fan $\mathbf{\Sigma}=(N, \Sigma, \{v_\rho\}_{\rho \in
    \Sigma(1)}\})$, let $S(\Sigma)=\ZZ[x_\rho \mid \rho \in
  \Sigma(1)]$.  Let $I_{\mbf{\Sigma}}$ be the ideal of $S(\Sigma)$
  generated by $\sum_{\rho \in \Sigma(1)} \langle m, v_\rho \rangle
  x_\rho$ as $m$ ranges over $M$. Let $J_\Sigma$ be the ideal of
  $S(\Sigma)$ generated by monomials $x_{\rho_1}\cdots x_{\rho_s}$
  such that the rays $\rho_1, \ldots, \rho_s$ are not contained in any
  cone in $\Sigma$:
\begin{equation}\label{eqn:stanley-reisner-ideal}
J_\Sigma = \langle x_{\rho_1}\cdots x_{\rho_s} \mid \{ \rho_1, \ldots, \rho_s \} \nsubseteq \tau \in \Sigma \rangle
\end{equation}
We will call $J_\Sigma$ the Stanley--Reisner ideal of
$\Sigma$. Following \cite[Definition 2.1]{Iwa:09} (but not exactly the
possibly more standard terminology in \cite[Definition
12.4.10]{CLS:11}, see also \cite{MS:05}), define the Stanley--Reisner
ring $SR(\mbf{\Sigma})$ to be
\begin{equation}\label{eqn:stanley-reisner-ring}
SR(\mbf{\Sigma})=S(\Sigma)/(I_{\mbf{\Sigma}} + J_\Sigma)
\end{equation}
\end{definition}

Suppose a group $G$ acts on a smooth scheme $X$, and let $\mc{X}=[X/G]$ be the resulting quotient stack. The integral Chow ring $A^*(\mc{X})$ of $\mc{X}$ is the $G$-equivariant Chow ring $A^*_G(X)$ of $X$ \cite{EdGr:98}.  
We prove that the integral Chow ring of an Artin toric stack
$\ts{\Sigma}$ is the Stanley--Reisner ring of the stacky fan
$\mbf{\Sigma}$. Iwanari \cite{Iwa:09} proved the same result for toric
Deligne-Mumford stacks. Our proof is different: rather than computing
the equivariant Chow groups directly from the definition, we use the
excision sequence for equivariant Chow groups.

\begin{proposition}\label{prop:chow-ring}
  Let $\mathbf{\Sigma}=(N, \Sigma, \{v_\rho\}_{\rho \in \Sigma(1)})$
  be a stacky fan. There is an isomorphism between the integral Chow
  ring $A^*(\ts{\Sigma})$ of the smooth toric Artin stack
  $\mc{X}(\mathbf{\Sigma})$ and the Stanley--Reisner ring
  $SR(\mbf{\Sigma})$, sending $[V(X_\rho)]_{G({\mathbf \Sigma})} \to
  x_\rho$. Here $[V(X_\rho)]_{G({\mathbf \Sigma})} \in
  A^*_{G({\mathbf \Sigma})}(C(\Sigma))=A^*(\ts{\Sigma})$ denotes the equivariant
  fundamental class of the divisor $V(X_\rho)$ of $C(\Sigma)$.
\end{proposition}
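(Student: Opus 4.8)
The plan is to compute the equivariant Chow ring $A^*_{G}(C(\Sigma))$, where $G := G(\mathbf{\Sigma})$, by first computing the Chow ring of the ambient affine space $\mathbb{A}^n = \Spec S$ and then applying the excision sequence to remove the irrelevant locus $V(B(\Sigma))$. Since $\mathbb{A}^n$ is a linear representation of $G \hookrightarrow (\CC^*)^n$, homotopy invariance of equivariant Chow groups gives $A^*_G(\mathbb{A}^n) \cong A^*_G(\pt) = A^*(BG)$. Because $G$ is diagonalizable (a closed subgroup of a torus), $A^*(BG)$ is the symmetric algebra $\mathrm{Sym}_\ZZ(\widehat G)$ on the character group $\widehat G$ placed in degree one: approximating $BG$ by quotients of $\mathbb{A}^N \smallsetminus \{0\}$ yields a polynomial ring when $\widehat G$ is free, and a torsion summand $\ZZ/k$ of $\widehat G$ contributes a degree-one generator $t$ with the relation $kt=0$. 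Now $\widehat G = \coker\big(M \to \ZZ^{\Sigma(1)},\ m \mapsto (\langle m, v_\rho\rangle)_\rho\big)$, so $\mathrm{Sym}_\ZZ(\widehat G) = \ZZ[x_\rho \mid \rho \in \Sigma(1)]/I_{\mathbf{\Sigma}} = S(\Sigma)/I_{\mathbf{\Sigma}}$; under this isomorphism the class $[V(X_\rho)]_G$ of the coordinate hyperplane — the equivariant first Chern class of the corresponding $G$-linearized trivial line bundle — maps to the image of the standard basis vector $e_\rho$, namely $x_\rho$.

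For the second step, set $Z = V(B(\Sigma))$, so $C(\Sigma) = \mathbb{A}^n \smallsetminus Z$, and use the excision exact sequence of equivariant Chow groups
$$A_*^G(Z) \xrightarrow{i_*} A_*^G(\mathbb{A}^n) \xrightarrow{j^*} A_*^G(C(\Sigma)) \longrightarrow 0 .$$
Since $\mathbb{A}^n$ and $C(\Sigma)$ are smooth, the last two groups are the cohomological Chow rings and $j^*$ is the ring homomorphism restricting a class to the open subset; hence $A^*_G(C(\Sigma)) \cong A^*_G(\mathbb{A}^n)/\ker(j^*)$, and it remains to identify $\ker(j^*) = \image(i_*)$ with the image of $J_\Sigma$.

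To do this, observe that $Z$, with its reduced structure, is the union of the coordinate subspaces $Z_I := V(x_\rho : \rho \in I)$ over subsets $I \subseteq \Sigma(1)$ that are \emph{not} contained in any cone of $\Sigma$, its irreducible components being the minimal such $I$. The Mayer--Vietoris surjection $\bigoplus_I A_*^G(Z_I) \twoheadrightarrow A_*^G(Z)$ (a cycle on a union of closed sets is supported on the components) shows $\image(i_*) = \sum_I \image\big(A_*^G(Z_I) \to A_*^G(\mathbb{A}^n)\big)$. Each $Z_I$ is again a $G$-representation, so the restriction $A^*_G(\mathbb{A}^n) \to A^*_G(Z_I)$ is an isomorphism, and by the projection formula the Gysin pushforward $A^*_G(Z_I) \to A^*_G(\mathbb{A}^n)$ is multiplication by $[Z_I]_G$; its image is thus the principal ideal $([Z_I]_G)$. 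Finally $Z_I$ is the transverse intersection of the smooth coordinate divisors $V(X_\rho)$, $\rho \in I$, so $[Z_I]_G = \prod_{\rho\in I} x_\rho$. Hence $\ker(j^*)$ is the ideal generated by the monomials $\prod_{\rho\in I} x_\rho$ as $I$ ranges over collections of rays not lying in a common cone — exactly the image of $J_\Sigma$ in $S(\Sigma)/I_{\mathbf{\Sigma}}$ — and therefore $A^*_G(C(\Sigma)) \cong S(\Sigma)/(I_{\mathbf{\Sigma}} + J_\Sigma) = SR(\mathbf{\Sigma})$, compatibly with $[V(X_\rho)]_G \mapsto x_\rho$.

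The surrounding bookkeeping is routine; the two points that need genuine care are the following. First, the identification $A^*(BG) \cong \mathrm{Sym}_\ZZ(\widehat G)$ for a possibly nonreduced diagonalizable $G$, and the verification that it carries $[V(X_\rho)]_G$ to the generator $x_\rho$ — i.e.\ matching the Cox-theoretic map $\ZZ^{\Sigma(1)} \to \widehat G$ exactly, with its torsion and signs. Second, the claim that $\image(i_*)$ is precisely the ideal generated by the component classes $[Z_I]_G$ rather than merely some subgroup containing them; this relies on the restriction maps $A^*_G(\mathbb{A}^n) \to A^*_G(Z_I)$ being surjective, which is what allows the projection formula to express $i_*$ as multiplication by $[Z_I]_G$.
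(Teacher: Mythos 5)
Your proposal is correct and follows essentially the same route as the paper: homotopy invariance to identify $A^*_{G}(\Spec S)$ with $A^*_G(\pt)=S(\Sigma)/I_{\mathbf{\Sigma}}$, the equivariant excision sequence for the open inclusion $C(\Sigma)\subset \Spec S$, and the identification of the image of $A^*_G(V(B(\Sigma)))$ with $J_\Sigma$ via the decomposition of $V(B(\Sigma))$ into coordinate subspaces indexed by sets of rays not contained in a cone (the paper records this as a primary decomposition of $B(\Sigma)$, while you use the set-theoretic version, which suffices). Your explicit justification that the image of the pushforward is the ideal generated by the classes $\prod_{\rho\in I}x_\rho$ — via the restriction isomorphism and the self-intersection/projection formula — fills in a step the paper leaves implicit, but it is the same argument in substance.
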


\begin{proof}
As before, let $S(\Sigma)=\ZZ[x_\rho \mid \rho \in \Sigma(1)]$, $S=\CC[X_\rho \mid \rho \in \Sigma(1)]$, and $G=G({\mathbf \Sigma})$.

The excision sequence in equivariant Chow groups is the exact sequence:

$$A^*_G({V}(B(\Sigma))) \to A^*_G(\Spec S)  \to A^*_G(C(\Sigma)) \to 0$$ 

Since $\Spec S$ is an affine bundle over $\pt=\Spec \CC$, we have
$A^*_G(\Spec S)=A^*_G(\pt)=S(\Sigma)/I_{\mbf{\Sigma}}$. By the
following lemma, the image of $A^*_G({V}(B(\Sigma)))$ in $A^*_G(\Spec
S)$ is the Stanley--Reisner ideal $J_\Sigma$ (equation
\ref{eqn:stanley-reisner-ideal}). Hence $A^*_G(C(\Sigma))=
S(\Sigma)/(I_{\mbf{\Sigma}} + J_\Sigma)=SR(\mbf{\Sigma})$, the
Stanley--Reisner ring of $\Sigma$.
\end{proof}

\begin{lemma}
Let $B(\Sigma)=(X_{\widehat{\s}} \mid \sigma \in \Sigma)S$ be as in Section \ref{subsec:Cox-constr}. Then we have a primary decomposition 
\begin{equation}\label{primary-decomp}
B(\Sigma)=\bigcap_{\{ \rho_1, \ldots, \rho_s \} \nsubseteq \tau \in \Sigma} (X_{\rho_1}, \ldots, X_{\rho_s})
\end{equation}
where the intersection ranges over sets of rays $\{\rho_1, \ldots, \rho_s\}$ that are not contained in some cone in $\Sigma$. 
\end{lemma}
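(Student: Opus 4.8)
The plan is to prove the two inclusions of the claimed equality of ideals in $S = \CC[X_\rho \mid \rho \in \Sigma(1)]$ separately, and then observe that the resulting expression is indeed a primary decomposition since each $(X_{\rho_1}, \ldots, X_{\rho_s})$ is a prime ideal (being generated by a subset of the variables in a polynomial ring). The core combinatorial input is the elementary fact that a set of rays $\{\rho_1, \ldots, \rho_s\}$ is \emph{contained} in some cone $\tau \in \Sigma$ if and only if $\{\rho_1,\ldots,\rho_s\}$ is a subset of the rays of $\tau$, i.e.\ $\{\rho_1,\ldots,\rho_s\} \subseteq \tau(1)$; equivalently the complement $\Sigma(1) \smallsetminus \{\rho_1,\ldots,\rho_s\}$ contains $\Sigma(1)\smallsetminus\tau(1) = \widehat\tau$ (in the notation of Section~\ref{subsec:Cox-constr}). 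It is convenient to translate the membership condition for a monomial into a statement about the supports of exponent vectors.

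First I would prove $\supseteq$. Fix a cone $\sigma \in \Sigma$ and a set of rays $\{\rho_1,\ldots,\rho_s\}$ not contained in any cone of $\Sigma$; I must show $X_{\widehat{\sigma}} \in (X_{\rho_1},\ldots,X_{\rho_s})$. Since $\{\rho_1,\ldots,\rho_s\} \nsubseteq \sigma$, there is some $\rho_j \notin \sigma(1)$, hence $\rho_j \in \Sigma(1)\smallsetminus\sigma = \widehat\sigma$, so the variable $X_{\rho_j}$ divides the monomial $X_{\widehat\sigma} = \prod_{\rho \notin \sigma} X_\rho$. Therefore $X_{\widehat\sigma} \in (X_{\rho_j}) \subseteq (X_{\rho_1},\ldots,X_{\rho_s})$. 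As this holds for every generator $X_{\widehat\sigma}$ of $B(\Sigma)$ and every such set of rays, we get $B(\Sigma) \subseteq \bigcap (X_{\rho_1},\ldots,X_{\rho_s})$.

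For the reverse inclusion $\subseteq$, take a monomial $X^{\ve a} = \prod_\rho X_\rho^{a_\rho}$ lying in the intersection; it suffices to treat monomials since all the ideals involved are monomial ideals. Let $P = \{\rho : a_\rho = 0\}$ be the set of rays \emph{not} dividing $X^{\ve a}$. I claim $P \subsetneq \tau(1)$ for no... rather, I claim that $P$ is \emph{not} equal to $\sigma(1)^c$... let me phrase it cleanly: I claim the set of rays $R := \Sigma(1)\smallsetminus P$ (those that \emph{do} divide $X^{\ve a}$) is not contained in any cone of $\Sigma$. If it were, say $R \subseteq \tau(1)$ for some $\tau \in \Sigma$, then $\widehat\tau = \Sigma(1)\smallsetminus\tau(1) \subseteq P$, so no ray of $\widehat\tau$ divides $X^{\ve a}$; but picking any ray set witnessing this — concretely, the monomial $X^{\ve a}$ fails to be divisible by any $X_{\rho_j}$ for $\rho_j$ ranging over a set not contained in a cone — contradicts $X^{\ve a} \in (X_{\rho_1},\ldots,X_{\rho_s})$ for \emph{that} set. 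More directly: since $R$ is not contained in any cone, the set $\{\rho : \rho \in R\}$ itself appears (or contains a subset appearing) among the index sets of the intersection, and $X^{\ve a} \in (X_\rho \mid \rho \in R')$ for a minimal non-contained subset $R' \subseteq R$ forces some $X_\rho$ with $\rho \in R'\subseteq R$ to divide $X^{\ve a}$, which is true by definition of $R$ — so this direction needs the contrapositive. Running it as a contrapositive: if $X^{\ve a} \notin B(\Sigma)$, then $X^{\ve a}$ is not divisible by any $X_{\widehat\sigma}$, meaning for each $\sigma \in \Sigma$ some ray of $\widehat\sigma$ fails to divide $X^{\ve a}$, i.e.\ lies in $P$; equivalently $\widehat\sigma \not\subseteq R$ for all $\sigma$, i.e.\ $R \not\supseteq \widehat\sigma$, i.e.\ $R^c = P \not\subseteq \sigma(1)^c$... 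I will instead show directly that $R$ being contained in no cone is impossible, producing a cone $\sigma$ with $\widehat\sigma \subseteq R$, hence $X_{\widehat\sigma} \mid X^{\ve a}$ and $X^{\ve a} \in B(\Sigma)$. The clean statement to prove is: \emph{a subset $R \subseteq \Sigma(1)$ is contained in the ray set of some cone of $\Sigma$ if and only if $\Sigma(1)\smallsetminus R$ does not contain $\widehat\sigma$ for any $\sigma$} — and this is where the only real content lies.

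The main obstacle is precisely the combinatorial lemma underlying $\subseteq$: one must show that if a monomial is divisible by at least one variable from every ``non-face'' ray set, then it is divisible by $X_{\widehat\sigma}$ for some cone $\sigma$. This is a duality between faces and non-faces of the fan (regarded as an abstract simplicial-like complex on the rays, though $\Sigma$ need not be simplicial — one works with the collection of subsets of $\Sigma(1)$ of the form $\tau(1)$). Concretely, given the monomial $X^{\ve a}$ in the intersection, let $R$ be its support; if $R$ were contained in no $\tau(1)$, then $R$ (or a minimal subset of it) is one of the index sets in the intersection, so $X^{\ve a} \in (X_\rho \mid \rho \in R'\text{, }R'\subseteq R\text{ minimal non-face})$, forcing some variable $X_{\rho_0}$ with $\rho_0 \in R' \subseteq R$ to divide $X^{\ve a}$ — which is automatic and gives no contradiction, so one must instead argue: the complement $P = \Sigma(1)\smallsetminus R$, since $X^{\ve a}$ lies in the intersection, cannot contain any full non-face, hence... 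Actually the right move is: pick $\sigma$ to be any maximal cone whose ray set contains $R$; such a $\sigma$ exists \emph{precisely because} $R$ must be contained in some cone — and that containment is what must be extracted. So the argument is: suppose for contradiction $R \nsubseteq \tau(1)$ for every $\tau \in \Sigma$; then $R$ contains a minimal subset $\{\rho_1,\ldots,\rho_s\}$ with $\{\rho_1,\ldots,\rho_s\} \nsubseteq \tau$ for all $\tau$; this set indexes one of the prime ideals in the intersection, so $X^{\ve a} \in (X_{\rho_1},\ldots,X_{\rho_s})$ — true but unhelpful. The genuine fix, which I expect to be the crux of the write-up, is to note that the intersection on the right is \emph{by construction} exactly the monomial ideal whose monomials are those $X^{\ve a}$ with support $R$ satisfying $R \nsubseteq \tau(1)$ for all $\tau$ is \emph{false} — rather, $X^{\ve a}$ is in $(X_{\rho_1},\ldots,X_{\rho_s})$ iff $\mathrm{supp}(X^{\ve a}) \cap \{\rho_1,\ldots,\rho_s\}\neq\emptyset$, so $X^{\ve a}$ is in the intersection iff its support meets every non-face, iff its support is \emph{not} contained in the complement of any single ray of... no: support meets every minimal non-face $\iff$ support is not disjoint from any minimal non-face $\iff$ the complement of the support contains no minimal non-face $\iff$ the complement of the support contains no non-face $\iff$ the complement of the support \emph{is} a face (ray set of a cone), say $\Sigma(1)\smallsetminus R = \tau(1)$... but we only need it to be \emph{contained} in a face, which since faces are downward closed is the same: $\Sigma(1)\smallsetminus R \subseteq \tau(1)$ means $\widehat\tau \subseteq R$, hence $X_{\widehat\tau} \mid X^{\ve a}$, hence $X^{\ve a} \in B(\Sigma)$. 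This chain of equivalences — each step elementary, using that the non-faces are upward closed and the faces downward closed — is the whole proof, and assembling it carefully is the only real work.
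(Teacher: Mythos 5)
Your final argument is correct and is essentially the paper's proof: the containment $B(\Sigma)\subseteq\bigcap(X_{\rho_1},\ldots,X_{\rho_s})$ via picking a ray $\rho_i\notin\sigma$, then the reverse inclusion by reducing to monomials and observing that a monomial lies in the intersection iff its support meets every non-face iff the complement of its support is contained in some cone $\tau$, i.e.\ $X_{\widehat{\tau}}$ divides it (the paper phrases this last step as a contrapositive rather than a chain of equivalences, but the content is identical). The many false starts and the swapped $\subseteq/\supseteq$ labels in your narration should be cleaned up, but they do not affect the correctness of the assembled argument.
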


\begin{proof}
  First we show the ``$\subseteq$'' inclusion. Fix $\s \in
  \Sigma$. For any $\{ \rho_1, \ldots, \rho_s \}$ not contained in a
  cone in $\Sigma$, there is some $\rho_i$ (where $1 \leq i \leq s$) such that $\rho_i \notin
  \sigma$, (for otherwise $\{ \rho_1, \ldots, \rho_s \} \subseteq
  \s$). Hence $X_{\widehat{\s}} \in (X_{\rho_1}, \ldots, X_{\rho_s})$,
  and hence we have proved the ``$\subseteq$'' inclusion.

  To show the opposite inclusion, let $RI$ denote the ideal on the
  right hand side of equation \ref{primary-decomp}. Since $RI$ is a
  monomial ideal, it suffices to show that monomials in $RI$ belong to
  $B(\Sigma)$, i.e. we need to show if a monomial $X_{\rho_1}\cdots
  X_{\rho_s}$ lies in $RI$, then it lies in $B(\Sigma)$. We will show
  the contrapositive, namely that if $X_{\rho_1}\cdots X_{\rho_s}
  \notin B(\Sigma)$, then $X_{\rho_1}\cdots X_{\rho_s} \notin RI$. If
  $X_{\rho_1}\cdots X_{\rho_s} \notin B(\Sigma)$, then the set
  $\Sigma(1) \smallsetminus \{\rho_1, \ldots, \rho_s \}$ is not
  contained in any cone of $\Sigma$ (because if $\Sigma(1)
  \smallsetminus \{\rho_1, \ldots, \rho_s \} \subseteq \tau(1)$ for
  some cone $\tau \in \Sigma$, then taking complements gives
  $\{\rho_1, \ldots, \rho_s \} \supseteq \Sigma(1) \smallsetminus
  \tau(1)$, and hence $X_{\rho_1}\cdots X_{\rho_s}$ would be a
  multiple (in $S$) of $X_{\widehat{\tau}} \in B(\Sigma)$). If we let
  $\{q_1, \ldots, q_t\}= \Sigma(1) \smallsetminus \{\rho_1, \ldots,
  \rho_s \}$ then $X_{\rho_1}\cdots X_{\rho_s} \notin (X_{q_1},
  \ldots, X_{q_t})$, and hence $X_{\rho_1}\cdots X_{\rho_s} \notin
  RI$.

\end{proof}

In the remainder of this article, by virtue of the isomorphism
$SR(\mbf{\Sigma}) \simeq A^*(\ts{\Sigma})$ given in
Prop. \ref{prop:chow-ring}, we will write $x_\rho$ to mean
$[V(X_\rho)]_{G({\mathbf \Sigma})} \in A^*(\ts{\Sigma})$ (where $\rho \in
\Sigma(1)$).

\section{Integration on complete toric Deligne-Mumford stacks}
In this section, we will study intersection theory on complete toric 
Deligne-Mumford stacks.
In this case,
we have an isomorphism $\phi:A^*(\ts{\Sigma})_\QQ \to
A^*(X(\Sigma))_\QQ$ \cite{EdGr:98} and  for the rest of this paper, we
will work with the rational (as opposed to integral) Chow ring
$A^*(\ts{\Sigma})_\QQ$ of toric stacks.

\begin{definition}[$D_{\s, \mbf{\Sigma}}$, stacky multiplicity of a cone]\label{def:stacky-multiplicity}
  Let $N$ be a lattice and let $\s$ be simplicial cone $N_\RR$, and
  let $s=\dim \s$. Suppose $v_1, \ldots, v_s$ are elements of $N$ such
  that $\s=\RR_{\geq 0}\langle v_1, \ldots, v_s\rangle$ (in
  particular, each $v_i$ is a lattice point on exactly one ray of
  $\s$). Let $N_\s= \ZZ \langle N \cap \s \rangle$ be the lattice
  generated by $N \cap \s$. We define the (stacky) multiplicity of the
  stacky cone $\overline{\sigma}=(N, \s, \{v_1, \ldots, v_s\})$ to be
  the order of $N_\s/\ZZ\langle v_1, \ldots, v_s\rangle$, and denote
  this positive integer by $D_{\overline{\s}}$.  Let
  $\mathbf{\Sigma}=(N, \Sigma, \{v_1, v_2, \ldots, v_n\})$ be a stacky
  fan and $\s$ a simplicial cone in $\Sigma$. Label the $v_i$ so that
  $v_1, \ldots, v_s$ are on the rays of $\s$. Let $\overline{\s}=(N,
  \s, \{v_1, \ldots, v_s\})$. Define the stacky multiplicity of $\s$
  with respect to $\mbf{\Sigma}$ to be $D_{\overline{\s}}$, and denote
  this quantity by $D_{\s, \mbf{\Sigma}}$.
\end{definition}

\begin{proposition}\cite[p.3749]{EdGr:03}\label{prop:dan-isom-thm-toric}
Let $\mathbf{\Sigma}$ be a complete simplicial stacky fan, and let $\pi:\ts{\Sigma} \to X(\Sigma)$ be the coarse moduli space of $\ts{\Sigma}$. Given distinct rays $\rho_1, \ldots, \rho_s$ of $\Sigma$ forming a cone $\s=\langle \rho_1, \ldots, \rho_s \rangle$ in $\Sigma$, let $V(\s) \subseteq X(\Sigma)$ be the orbit closure associated to $\s$.  Let $\phi:A^*(\ts{\Sigma})_\QQ \to A^*(X(\Sigma))_\QQ$ be the isomorphism of Chow rings (\cite[Theorems 3,4]{EdGr:98}). Then  
\begin{equation}\label{eqn:dan-isomorphism-formula}
\phi(x_{\rho_1} \cdots x_{\rho_s})=\frac{1}{D_{\s, \mbf{\Sigma}}}[V(\s)],
\end{equation} 
where $D_{\s, \mbf{\Sigma}}$ is the stacky multiplicity of $\s$ in $\mbf{\Sigma}$ (Definition \ref{def:stacky-multiplicity}).
\end{proposition}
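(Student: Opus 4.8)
The plan is to reduce the statement to a local computation on a single affine toric stack corresponding to the simplicial cone $\s$, and then identify the pushforward of the fundamental class under the coarse moduli map. First I would recall that under the isomorphism $\phi$ of \cite{EdGr:98}, the operational/Chow classes on $\ts{\Sigma}$ and on $X(\Sigma)$ are matched compatibly with restriction to torus-invariant affine opens; so it suffices to verify \eqref{eqn:dan-isomorphism-formula} after restricting to the affine toric stack $\mc{X}(\overline{\s})$ associated to the stacky subcone $\overline{\s}=(N,\s,\{v_1,\dots,v_s\})$ together with its coarse space $U_\s = \Spec\CC[\s^\vee\cap M]$. On the stack side, by Proposition~\ref{prop:chow-ring} the class $x_{\rho_1}\cdots x_{\rho_s}$ is the equivariant fundamental class of the intersection of the coordinate divisors $V(X_{\rho_1})\cap\cdots\cap V(X_{\rho_s})$ in the Cox space, which is a single $G$-orbit; its image in the coarse space is exactly the orbit closure $V(\s)$, so the content of the proposition is entirely the rational coefficient $1/D_{\s,\mbf{\Sigma}}$.

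The key step is to compute that coefficient. I would do this by choosing a splitting that isolates the $s$-dimensional simplicial cone: after quotienting $N$ by a complement, one reduces to the case where $N$ has rank $s$, $\s$ is full-dimensional and simplicial, and $v_1,\dots,v_s$ span a finite-index sublattice $N' = \ZZ\langle v_1,\dots,v_s\rangle \subseteq N_\s$. In this situation the toric stack $\mc{X}(\overline{\s})$ is the quotient $[\A^s / \mu]$ where $\mu = N_\s/N'$ is the (finite abelian) Gale-dual group, acting on $\A^s$ through the characters determined by the $v_i$. The class $x_1\cdots x_s$ is then $[\{0\}]_\mu = \tfrac{1}{|\mu|}[\pt]$ in $A^*_\mu(\A^s)_\QQ = A^*([\A^s/\mu])_\QQ$, while its pushforward to the coarse space $\A^s/\mu$ is the fundamental class of the image point, which corresponds to $[V(\s)]$. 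Since $|\mu| = |N_\s/\ZZ\langle v_1,\dots,v_s\rangle| = D_{\s,\mbf{\Sigma}}$ by Definition~\ref{def:stacky-multiplicity}, the formula follows. I would note that compatibility of $\phi$ with the inclusion $\mc{X}(\overline{\s})\hookrightarrow\ts{\Sigma}$ lets one transport this local identity back to the complete fan.

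The main obstacle is making the localization/splitting argument clean: one must check that restricting to the affine open $U_\s\subset X(\Sigma)$ (resp. the corresponding open substack) is injective enough on the relevant Chow classes, or equivalently that $\phi$ genuinely commutes with the flat restriction maps on both sides — this is where one leans on the precise construction of $\phi$ in \cite{EdGr:98} rather than treating it as a black box. A second, more bookkeeping-type point is the passage from the rank-$d$ lattice $N$ with its full fan $\Sigma$ to the rank-$s$ local model: one has to be careful that the stacky structure $\{v_i\}$ restricts correctly and that the multiplicity $D_{\s,\mbf{\Sigma}}$ defined via $N_\s$ (the saturation-type lattice generated by $N\cap\s$) is the one that actually governs the torsion of the stabilizer group. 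Once those two compatibilities are in place, the computation of the coefficient $1/|\mu|$ via $[\{0\}]_\mu$ is routine.
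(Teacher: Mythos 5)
First, a point of comparison: the paper gives no proof of this proposition at all --- it is quoted from \cite[p.~3749]{EdGr:03} --- so your argument can only be judged on its own terms. On those terms, your local analysis is correct: over the invariant chart of $\s$ the stack is, after splitting off a torus factor (using that $N_\s$ is saturated, so a complement exists), the quotient $[\A^s/\mu]$ with $\mu \cong N_\s/\ZZ\langle v_1,\dots,v_s\rangle$, and $|\mu| = D_{\s,\mbf{\Sigma}}$ is exactly the source of the coefficient in \eqref{eqn:dan-isomorphism-formula}.

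The genuine gap is in your bridging step. You reduce to the chart by asserting that it suffices to verify the identity after restricting to $U_\s$, and you flag that one needs restriction to be ``injective enough'' or that $\phi$ commutes with flat restriction. Compatibility of $\phi$ with open restriction is true but does not close the gap: restriction to an open subset is never injective on Chow groups in this setting, so showing that $\phi(x_{\rho_1}\cdots x_{\rho_s})-\tfrac{1}{D_{\s,\mbf{\Sigma}}}[V(\s)]$ restricts to zero on $U_\s$ only shows it is supported on the complement of $U_\s$, not that it vanishes. The standard way to repair this is to avoid Chow groups of opens altogether: since $V(X_{\rho_1})\cap\cdots\cap V(X_{\rho_s})\cap C(\Sigma)$ is an irreducible coordinate complete intersection in the smooth Cox space, the product $x_{\rho_1}\cdots x_{\rho_s}$ is \emph{globally} the fundamental class of the integral closed substack $\mc{V}(\s)\subset \ts{\Sigma}$ lying over $V(\s)$ (note this is transversality in $C(\Sigma)$, not Proposition \ref{prop:chow-ring}, which only gives the ring presentation; also $\mc{V}(\s)$ is the whole orbit closure, not a single orbit as you wrote). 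In the normalization relevant here, $\phi$ agrees with proper pushforward along the coarse moduli map $\pi$, so $\phi([\mc{V}(\s)])=\deg\bigl(\mc{V}(\s)\to V(\s)\bigr)\,[V(\s)]$, and this degree is the reciprocal of the order of the stabilizer at a \emph{generic} point of $\mc{V}(\s)$ --- a generic-point computation, not a Chow-group computation, which is precisely what your $[\A^s/\mu]$ model supplies. (This identification of $\phi$ with $\pi_*$ is the one place where \cite{EdGr:98} cannot be treated as a black box, as you yourself anticipated.) With the argument reorganized this way, your proposal is complete and is essentially the argument underlying the cited formula of \cite{EdGr:03}.
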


\subsection{Self-intersections on toric Deligne-Mumford stacks}
If $\rho_1, \ldots, \rho_s$ are not distinct, then to compute
$\phi(x_{\rho_1} \cdots x_{\rho_s})$, we can express $x_{\rho_1}
\cdots x_{\rho_s}$ as a rational linear combination of monomials with
no factor $x_\rho$ appearing more than once, and then we can apply
Proposition \ref{prop:dan-isom-thm-toric} to compute $\phi$. The
following theorem gives a procedure for
finding such a linear combination.

\begin{theorem}[Recursive formula for
  self-intersections]\label{prop:general-equiv-self-intersection}
  Let $\mathbf{\Sigma}=(\ZZ^d, \Sigma, \{v_\rho \}_{\rho \in
    \Sigma(1)})$ be a complete simplicial stacky fan.  Let $\rho_1,
  \ldots, \rho_s$ be distinct rays in $\Sigma$, and let $a_1, \ldots,
  a_s$ be positive integers and assume $a_{i_0} \geq 2$ for some $1
  \leq i_0 \leq s$. If $\rho_1, \ldots, \rho_s$ are not contained in a
  cone in $\Sigma$, then $\prod_{i=1}^s x_{\rho_i} =0$ (in
  $A^*(\ts{\Sigma}$) and hence $\prod_{i=1}^s x_{\rho_i}^{a_i}
  =0$. Otherwise, fix a (maximal) $d$-dimensional cone $\s=\langle
  \rho_1, \ldots, \rho_s, \rho_{s+1}, \ldots, \rho_d\rangle \in
  \Sigma(d)$ containing $\rho_1, \ldots, \rho_s$. Let $C = \{ \rho \in
  \Sigma(1) \smallsetminus \{\rho_1, \ldots, \rho_d \} \mid \langle
  \rho, \rho_1, \ldots, \rho_s \rangle \in \Sigma\}$ (note $C \neq
  \emptyset$ since $\Sigma$ is complete).

For a ray $\rho \in C$, let $b_{\rho_i, \rho}$ be rational numbers such that
\begin{equation}\label{eqn:gen-relation}
v_{\rho} +  \sum_{i=1}^d b_{\rho_i, \rho} v_{\rho_i} =0
\end{equation}
(Such a relation always exists and is unique since $v_{\rho_1}, \ldots, v_{\rho_d}$ form a basis of $\RR^d$.) 

Then 
\begin{equation}\label{eqn:recursive-formula}
\prod_{i=1}^{s} x_{\rho_i}^{a_i}= \sum_{\rho \in C}{b_{\rho_{i_0}, \rho}}x_{\rho}x_{\rho_{i_0}}^{a_{i_0}-1}\prod_{i=1, i \neq i_0 }^sx_{\rho_i}^{a_i}
\end{equation}
in $A^*(\ts{\Sigma})_\QQ.$

\end{theorem}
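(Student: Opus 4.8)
The plan is to reduce the statement to the single linear relation (\ref{eqn:gen-relation}) among the primitive generators, translated into the Chow ring via the defining relations $I_{\mbf{\Sigma}}$. First I would dispose of the easy case: if $\rho_1,\ldots,\rho_s$ do not lie in a common cone, then $x_{\rho_1}\cdots x_{\rho_s}$ lies in the Stanley--Reisner ideal $J_\Sigma$, hence vanishes in $A^*(\ts{\Sigma})$ by Proposition \ref{prop:chow-ring}, and multiplying by further powers keeps it zero. So assume $\rho_1,\ldots,\rho_s$ span a cone, extend to a maximal cone $\s=\langle\rho_1,\ldots,\rho_d\rangle$, and fix the index $i_0$ with $a_{i_0}\geq 2$.

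The core computation is the following. Since $v_{\rho_1},\ldots,v_{\rho_d}$ is a $\QQ$-basis of $N_\QQ$, for each $m\in M$ the linear relation $\sum_{\rho\in\Sigma(1)}\langle m,v_\rho\rangle x_\rho \in I_{\mbf{\Sigma}}$ lets us express $x_{\rho_{i_0}}$ (up to the relations) as a $\QQ$-linear combination of the $x_\rho$ for $\rho\notin\{\rho_1,\ldots,\rho_d\}$. Concretely, dualizing (\ref{eqn:gen-relation}): choosing $m=m_{i_0}\in M_\QQ$ to be the element of the dual basis with $\langle m_{i_0},v_{\rho_j}\rangle=\delta_{i_0 j}$ for $1\le j\le d$, the relation in $I_{\mbf{\Sigma}}$ coming from (a lattice multiple of) $m_{i_0}$ reads
\[
x_{\rho_{i_0}} + \sum_{\rho\notin\{\rho_1,\ldots,\rho_d\}} \langle m_{i_0}, v_\rho\rangle\, x_\rho = 0 \quad\text{in } A^*(\ts{\Sigma})_\QQ,
\]
and one checks $\langle m_{i_0},v_\rho\rangle = b_{\rho_{i_0},\rho}$ by pairing (\ref{eqn:gen-relation}) with $m_{i_0}$. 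Now multiply this identity by $x_{\rho_{i_0}}^{a_{i_0}-1}\prod_{i\neq i_0} x_{\rho_i}^{a_i}$. The left-hand side becomes $\prod_{i=1}^s x_{\rho_i}^{a_i}$. On the right-hand side, for each $\rho\notin\{\rho_1,\ldots,\rho_d\}$ we get a term $b_{\rho_{i_0},\rho}\, x_\rho x_{\rho_{i_0}}^{a_{i_0}-1}\prod_{i\neq i_0} x_{\rho_i}^{a_i}$; this monomial contains the factor $x_\rho x_{\rho_1}\cdots x_{\rho_s}$, which is zero in $A^*(\ts{\Sigma})$ whenever $\{\rho,\rho_1,\ldots,\rho_s\}$ does not lie in a cone of $\Sigma$, i.e.\ whenever $\rho\notin C$. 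Hence only the terms with $\rho\in C$ survive, yielding exactly (\ref{eqn:recursive-formula}).

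The main obstacle is the bookkeeping around which $x_\rho$ survive and the identification of the coefficients. Two points need care. First, the relation in $I_{\mbf{\Sigma}}$ is indexed by $m\in M$ (integral), whereas $m_{i_0}$ is only rational; since we work in $A^*(\ts{\Sigma})_\QQ$ this is harmless — clear denominators to get an honest lattice point and divide back — but it should be stated. Second, one must confirm that $x_\rho x_{\rho_1}\cdots x_{\rho_s}=0$ for $\rho\notin C$: if $\rho\in\{\rho_{s+1},\ldots,\rho_d\}$ the coefficient $\langle m_{i_0},v_\rho\rangle$ is already $0$ by the dual-basis choice, so no such term appears; and if $\rho\notin\{\rho_1,\ldots,\rho_d\}$ with $\rho\notin C$, then by definition of $C$ the rays $\{\rho,\rho_1,\ldots,\rho_s\}$ are not contained in any cone, so the product of the corresponding variables lies in $J_\Sigma$ and vanishes by Proposition \ref{prop:chow-ring}. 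Finally, the nonemptiness of $C$ follows from completeness of $\Sigma$: the cone $\langle\rho_1,\ldots,\rho_s\rangle$ is a proper face, hence a face of more than one maximal cone, so there is a ray adjacent to it outside $\{\rho_1,\ldots,\rho_d\}$. Everything else is routine monomial algebra.
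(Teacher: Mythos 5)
Your proposal is correct and is essentially the paper's own argument: both multiply a linear relation coming from $I_{\mbf{\Sigma}}$ by the monomial $P=x_{\rho_{i_0}}^{a_{i_0}-1}\prod_{i\neq i_0}x_{\rho_i}^{a_i}$ and discard the terms $x_\rho P$ with $\rho\notin C\cup\{\rho_1,\ldots,\rho_d\}$ via the Stanley--Reisner vanishing of Proposition \ref{prop:chow-ring}; your choice of the single dual-basis covector $m_{i_0}$ just shortcuts the paper's step of running over all $m\in M$ and invoking linear independence of $v_{\rho_1},\ldots,v_{\rho_d}$. One small slip to fix: pairing \eqref{eqn:gen-relation} with $m_{i_0}$ gives $\langle m_{i_0},v_\rho\rangle=-b_{\rho_{i_0},\rho}$ (not $+b_{\rho_{i_0},\rho}$), and it is exactly this minus sign that, after moving the sum to the other side, produces the coefficient $+b_{\rho_{i_0},\rho}$ in \eqref{eqn:recursive-formula}.
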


\begin{remark}
  Note that on the right-hand side of eq. \ref{eqn:recursive-formula},
  the exponent of $x_{\rho_{i_0}}$ is one less than on the left-hand
  side, but for $i \neq i_0$ (and $1\leq i \leq s$) the exponent of
  $x_{\rho_i}$ is the same on both sides. Since there was nothing
  special about $x_{\rho_1}$ (other than $a_1 \geq 2$), we can
  repeatedly apply Theorem \ref{prop:general-equiv-self-intersection} to
  express any monomial $\prod_{i=1}^{s} x_{\rho_i}^{a_i}$ as a
  rational linear combination of monomials with no factor $x_\rho$
  appearing more than once. We can then apply Proposition
  \ref{prop:dan-isom-thm-toric} to compute $\phi$. Hence Theorem
  \ref{prop:general-equiv-self-intersection} gives a method to compute
  the integral of (or more generally, $\phi$ applied to) any Chow
  cohomology class on $\ts{\Sigma}$. We give an example after the
  proof.
\end{remark}

\begin{proof}
  For notational convenience, assume $i_0=1$. Let
  $P=x_{\rho_{1}}^{a_{1}-1}\prod_{i=2 }^sx_{\rho_i}^{a_i}$.  For every
  $m \in M$, we have $\sum_{\rho \in \Sigma(1)} \langle m, v_\rho
  \rangle x_\rho =0$ in the Chow ring $A^*(\ts{\Sigma})_\QQ$.
  Multiplying by $P$ we get
\begin{equation}\label{eqn:basic-linear-reln}
 \sum_{\rho \in C} \langle m, v_{\rho} \rangle x_{\rho} P + \sum_{i=1}^d \langle m, v_{\rho_i} \rangle x_{\rho_i} P=0
\end{equation}
where we used $x_\rho P =0$ for $\rho \notin C \cup \{\rho_1, \ldots,
\rho_d \}$ (by Proposition \ref{prop:chow-ring}). By eq. \ref{eqn:gen-relation}

$$v_{\rho} =  - \sum_{i=1}^d {b_{\rho_i, \rho}} v_{\rho_i} $$
Substituting this expression for $v_{\rho}$ in the first term of
eq. \ref{eqn:basic-linear-reln} gives

\begin{equation}
  \sum_{\rho \in C} \langle m, -\sum_{i=1}^d {b_{\rho_i, \rho}} v_{\rho_i} \rangle x_{\rho} P + +\sum_{i=1}^d \langle m, v_{\rho_i} \rangle x_{\rho_i} P =0
\end{equation}
and collecting the coefficients of $\langle m, v_{\rho_i} \rangle$ gives:

\begin{equation}\label{eqn:almost}
\sum_{i=1}^d \langle m, v_{\rho_i} \rangle (x_{\rho_i} P - \sum_{\rho \in C}{b_{\rho_i, \rho}}x_{\rho}P) =0
\end{equation}

Equation \ref{eqn:almost} holds for all $m \in M$. Since  $v_{\rho_1}, \ldots, v_{\rho_d}$ are linearly independent, we conclude that $x_{\rho_i} P - \sum_{\rho \in C}{b_{\rho_i, \rho}}x_{\rho}P =0 \in A^*(\ts{\Sigma})_\QQ$ for $1 \leq i\leq d$.

In particular, taking $i=1$, we conclude $x_{\rho_1} P = \sum_{\rho \in C}{b_{\rho_1, \rho}}x_{\rho}P$, i.e.
$$\prod_{i=1}^{s} x_{\rho_i}^{a_i}= \sum_{\rho \in C}{b_{\rho_1, \rho}}x_{\rho}x_{\rho_{1}}^{a_{1}-1}\prod_{i=2 }^sx_{\rho_i}^{a_i}$$

\end{proof}

\begin{example}\label{example:triple-intersection}
Let $v_1=(1,0,1)$, $v_2=(0, 2, 1)$, $v_3=(-1,0, 1)$, $v_4=(0,-1,1)$, $v_5=(0,0,-1)$. Let $\Sigma \subseteq \RR^3$ be the fan whose maximal cones are (the $\RR_{\geq 0}$-spans of) $\s=\langle v_1, v_2, v_3, v_4 \rangle$, and for $(i,j) \in \{ (1, 2), (2, 3), (3, 4), (1, 4) \}$, $\s_{ij5}=\langle v_i, v_j, v_5 \rangle$. The only nonsimplicial cone is $\s$ and its star subdivision is formed by splitting it into four  $3$-cones, each having the ray through $v_0=v_1+v_2+v_3+v_4=(0,1,4)$ as a ray. The cones in $\Sigma_\s(3) \smallsetminus \Sigma$ are $\s_{0ij}=\langle v_0, v_i, v_j \rangle$ for $(i,j) \in \{ (1, 2), (2, 3), (3, 4), (1, 4) \}$. Let $\mbf{\Sigma}=(\ZZ^3, \Sigma, \{v_1, v_2, v_3, v_4, v_5 \})$ and $\mbf{\Sigma_\s}=(\ZZ^3, \Sigma_\s, \{v_0, v_1, v_2, v_3, v_4, v_5 \})$ be the corresponding toric stack. For brevity, let $Dijk=D_{\s_{ijk}, \mbf{\Sigma_\s}}$.
Computing the determinants of the appropriate triples of lattice vectors $v_i$, we get that $D125=D235=2$, $D345=D145=1$, $D012=D023=7$, $D034=D014=5$.

For brevity let $x_i=x_{\langle v_i \rangle} \in
A^*(\ts{\Sigma_s})_\QQ$. We use Theorem
\ref{prop:general-equiv-self-intersection} to compute $\phi(x_0^2)$
and $\phi(x_0^3)$.  We will extend $v_0$ to the basis $v_0, v_1,
v_2$. Then $C=\{\langle v_3\rangle, \langle v_4\rangle\}$.  A
calculation shows that eq. \ref{eqn:gen-relation} reads $v_3+
\frac{-4}{7}v_0 + v_1 + \frac{2}{7}v_2=0$ and
$v_4+\frac{-3}{7}v_0+0\cdot v_1 + \frac{5}{7}v_2=0$. Hence
eq. \ref{eqn:recursive-formula} reads $x_0^3=\frac{-4}{7}x_0^2x_3 +
\frac{-3}{7}x_0^2x_4$. We have to do another iteration to simplify
$x_0^2x_3$ and $x_0^2x_4$. This time, we extend the basis $v_0, v_3$
to $v_0, v_3, v_4$. Then eq. \ref{eqn:gen-relation} becomes $v_1 +
\frac{-2}{5}v_0 + v_3 + \frac{-2}{5}v_4 =0$, and $v_2 +
\frac{-3}{5}v_0 + 0\cdot v_3 + \frac{7}{5}v_4 =0$. Then
eq. \ref{eqn:recursive-formula} gives $x_0^2x_3=\frac{-3}{5}x_0x_2x_3$
and $x_0^2x_4=\frac{-2}{5}x_0x_1x_4$. Hence $x_0^3 =
\frac{12}{35}x_0x_2x_3 + \frac{6}{35}x_0x_2x_4$, and we have
eliminated all the self intersections. Then by Proposition
\ref{prop:dan-isom-thm-toric} $\int_{\ts{\Sigma_\s}} x_0^3 =
\phi(x_0^3)= \frac{12}{35}\frac{1}{7} +
\frac{6}{35}\frac{1}{5}=\frac{102}{1225}.$

Similarly, $x_0^2=\frac{-4}{7}x_0x_3 + \frac{-3}{7}x_0x_4$. Applying Proposition \ref{prop:dan-isom-thm-toric} gives the relation $[V(\rho_0)]^2 = \frac{-4}{7}[V(\langle v_0, v_3 \rangle)] + \frac{-3}{7}\frac{1}{5}[V(\langle v_0, v_4 \rangle)]$ in $A^*(X(\Sigma_\s))_\QQ$ in the rational Chow group of the toric variety $X(\Sigma_\s)$.
\end{example}

As a special case of Theorem \ref{prop:general-equiv-self-intersection}, we recover a formula for $x_{\rho_1}^2x_{\rho_2} \cdots x_{\rho_{d-1}}$, equivalent to the one given in \cite[Prop. 6.4.4]{CLS:11}: 

\begin{corollary}\label{cor:cox-result}
Let $\mathbf{\Sigma}=(\ZZ^d, \Sigma, \{v_\rho \}_{\rho \in \Sigma(1)})$ be a complete simplicial stacky fan. Let $\tau^+$ and $\tau^-$ be distinct (maximal) $d$-dimensional cones of $\Sigma$ such that $\s=\tau^+ \cap \tau^-$ is a cone of dimension $d-1$. Let $\rho^+$ and $\rho^-$ be the two rays of $\Sigma$ such that $\tau^+=\langle \rho^+, \s \rangle$ and $\tau^-=\langle \rho^-, \s \rangle$. Let $\beta^+, \beta^-, b_\rho$ be rational numbers such that we have a relation 
\begin{equation}\label{eqn:relation}
\beta^+v_{\rho^+} + \beta^-v_{\rho^-} + \sum_{\rho \in \s(1)} b_\rho v_\rho =0
\end{equation}

Then for any $\rho \in \s(1)$, $$x_\rho x_\s = \frac{b_\rho}{\beta^+}x_{\rho^+}x_\s$$ in $A^*(\ts{\Sigma}).$ 
\end{corollary}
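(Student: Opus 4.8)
The plan is to derive the corollary as a direct specialization of Theorem~\ref{prop:general-equiv-self-intersection}. Write $x_\s=\prod_{\rho'\in\s(1)}x_{\rho'}$, so that for a fixed ray $\rho\in\s(1)$ the class in question is the monomial
$$
x_\rho x_\s \;=\; x_\rho^{2}\prod_{\rho'\in\s(1)\smallsetminus\{\rho\}}x_{\rho'}.
$$
Since $\Sigma$ is simplicial, $\s$ has exactly $d-1$ rays, so this monomial matches the hypotheses of Theorem~\ref{prop:general-equiv-self-intersection} with $\{\rho_1,\ldots,\rho_s\}=\s(1)$ (hence $s=d-1$), with exponents $a_i=1$ for $i\neq i_0$ and $a_{i_0}=2$, where $\rho_{i_0}=\rho$. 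The rays $\rho_1,\ldots,\rho_{d-1}$ lie in the common cone $\s\in\Sigma$, so we are in the second case of the theorem.

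First I would take, as the maximal $d$-dimensional cone that the theorem asks us to fix, the cone $\tau^-$; that is, I set $\rho_d=\rho^-$, so that $\langle\rho_1,\ldots,\rho_d\rangle=\langle\rho^-,\s\rangle=\tau^-$. Next I would identify the set $C=\{\rho''\in\Sigma(1)\smallsetminus\tau^-(1)\mid\langle\rho'',\s\rangle\in\Sigma\}$. The key point is that $\s$, being of codimension one in the complete fan $\Sigma$, is a wall between exactly two maximal cones, namely $\tau^+$ and $\tau^-$; hence any $\rho''\in\Sigma(1)$ with $\langle\rho'',\s\rangle\in\Sigma$ has $\langle\rho'',\s\rangle\in\{\tau^+,\tau^-\}$, and since $\rho''\notin\tau^-(1)$ this forces $\langle\rho'',\s\rangle=\tau^+$ and $\rho''=\rho^+$. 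Thus $C=\{\rho^+\}$, in agreement with the theorem's assertion that $C\neq\emptyset$.

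It then remains to match the relation occurring in Theorem~\ref{prop:general-equiv-self-intersection} with the relation~(\ref{eqn:relation}). For the single ray $\rho^+\in C$, equation~(\ref{eqn:gen-relation}) reads $v_{\rho^+}+\sum_{i=1}^{d}b_{\rho_i,\rho^+}v_{\rho_i}=0$, which is the \emph{unique} relation among $v_{\rho^+}$, $v_{\rho^-}$ and $\{v_{\rho'}\}_{\rho'\in\s(1)}$ in which $v_{\rho^+}$ has coefficient $1$; uniqueness holds because $v_{\rho^-}$ together with $\{v_{\rho'}\}_{\rho'\in\s(1)}$ form a basis of $\RR^d$, $\tau^-$ being a $d$-dimensional simplicial cone. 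Since $\beta^+=0$ would make~(\ref{eqn:relation}) a trivial relation among those same basis vectors (and $b_\rho/\beta^+$ meaningless), we have $\beta^+\neq0$; dividing~(\ref{eqn:relation}) by $\beta^+$ and comparing coefficients of the $v_{\rho'}$, $\rho'\in\s(1)$, gives $b_{\rho',\rho^+}=b_{\rho'}/\beta^+$, in particular $b_{\rho,\rho^+}=b_\rho/\beta^+$. Feeding $C=\{\rho^+\}$, $a_{i_0}=2$ and this coefficient into~(\ref{eqn:recursive-formula}) then yields
$$
x_\rho x_\s=\prod_{i=1}^{d-1}x_{\rho_i}^{a_i}=b_{\rho,\rho^+}\,x_{\rho^+}x_\rho\prod_{\rho'\in\s(1)\smallsetminus\{\rho\}}x_{\rho'}=\frac{b_\rho}{\beta^+}\,x_{\rho^+}x_\s,
$$
which is the claimed identity.

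I do not anticipate a genuine obstacle; the argument is a bookkeeping specialization of Theorem~\ref{prop:general-equiv-self-intersection}. The two points that need care are: (i) the identification $C=\{\rho^+\}$, which uses completeness of $\Sigma$ and the fact that a codimension-one cone of a complete fan is a wall between exactly two maximal cones; and (ii) normalizing relation~(\ref{eqn:relation}) so that $v_{\rho^+}$ has coefficient $1$ before invoking the uniqueness of relation~(\ref{eqn:gen-relation}). If one instead chooses $\tau^+$ as the fixed maximal cone, the same computation produces the symmetric identity $x_\rho x_\s=(b_\rho/\beta^-)\,x_{\rho^-}x_\s$.
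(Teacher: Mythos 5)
Your proposal is correct and is essentially the paper's own proof, which simply specializes Theorem~\ref{prop:general-equiv-self-intersection} with $s=d-1$, $a_{i_0}=2$ and the remaining exponents equal to $1$; you have merely made explicit the bookkeeping the paper leaves implicit (fixing $\tau^-$ as the maximal cone so that $C=\{\rho^+\}$, and normalizing relation~(\ref{eqn:relation}) by $\beta^+\neq 0$ to match the unique relation~(\ref{eqn:gen-relation})). No gaps.
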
 

\begin{proof}
  Simply take $s=d-1$, $a_1=2$, $a_i=1$ for $2 \leq i \leq s$ in
  Theorem \ref{prop:general-equiv-self-intersection}.
\end{proof}

\begin{remark}
Applying the ring isomorphism $\phi:A^*(\ts{\Sigma})_\QQ \to A^*(X(\Sigma))_\QQ$ to the result in Corollary \ref{cor:cox-result} gives $ \phi(x_\rho)\phi( x_\s) = \frac{b_\rho}{\beta^+}\phi(x_{\rho^+}x_\s)$. By Prop. \ref{prop:dan-isom-thm-toric}, this equality becomes $$[V(\rho)]\cdot[\frac{1}{D_{\s, \mbf{\Sigma}}}V(\s)] = \frac{b_\rho}{\beta^+}[\frac{1}{D_{\tau^+, \mbf{\Sigma}}}V(\tau^+)],$$ which rearranges to the formula given in \cite[Prop. 6.4.4]{CLS:11}:
$$[V(\rho)]\cdot[V(\s)] = \frac{b_\rho}{\beta^+}\frac{D_{\s, \mbf{\Sigma}}}{D_{\tau^+, \mbf{\Sigma}}}[V(\tau^+)],$$
for any $\rho \in \s(1)$.
\end{remark}


\subsection{Euler characteristic of toric Deligne-Mumford stacks}

\begin{definition} \label{def.eulerchar}
If  ${\mathcal X}$ is an $n$-dimensional smooth complete Deligne-Mumford stack
 we define the Euler characteristic of ${\mathcal X}$ as $
\chi({\mathcal X}) := \int_{\mathcal X} c_{n}({\mathbb T})$
where ${\mathbb T}$ is the tangent bundle of ${\mathcal X}$.
\end{definition}
\begin{remark}
  The Euler characteristic we define is a rational number and should
  not be confused with the orbifold Euler characteristic defined by
  physicists.  There is an orbifold version of the Gauss-Bonet
  theorem which implies that our definition is equivalent to the
  following topological definition for stacks of the form $[M/G]$, where
$G$ is a finite group and M is a manifold. The manifold $M$ has a CW decomposition where the 
stabilizer has constant order $g_c$ on the interior of each cell $c$. Then $\chi([M/G]) := \sum_{c} { (-1)^{\dim c}\over{g_c}}$.
Since every Deligne-Mumford stack (with finite stabilizer) 
is locally of the form $[M/G]$ the definition of Euler characteristic can be extended to arbitrary Deligne-Mumford stacks, but we do not need this here.
\end{remark}

The following is well-known but we include a proof for lack of a suitable reference.
\begin{proposition}\label{prop:simplicial-euler-char}
Let $\mbf{\Sigma}=(N, \Sigma, \{v_1, v_2, \ldots, v_n\})$ be a complete simplicial stacky fan (i.e $\Sigma$ is simplicial and complete). Let $\Sigma_{\max}$ denote the set of maximal cones in $\Sigma$. For a maximal cone $\s \in \Sigma_{\max}$, let $D_\s$ denote the multiplicity of $\s$. Then $$\chi(\mc{X}(\mbf{\Sigma})) = \sum_{\s \in \Sigma_{\max}} \frac{1}{D_{{\s}, \mbf{\Sigma}}}$$
\end{proposition}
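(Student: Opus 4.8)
The plan is to compute $c_n(\mathbb{T})$ for the toric Deligne-Mumford stack $\mc{X}(\mbf{\Sigma})$ explicitly using the generalized Euler sequence, and then integrate term by term using Proposition \ref{prop:dan-isom-thm-toric} together with the self-intersection machinery of Theorem \ref{prop:general-equiv-self-intersection}. First I would recall that, just as for smooth toric varieties, the tangent bundle $\mathbb{T}$ of $\ts{\Sigma}$ sits in a short exact sequence $0 \to \mc{O}^{\oplus r} \to \bigoplus_{\rho \in \Sigma(1)} \mc{O}(V(X_\rho)) \to \mathbb{T} \to 0$, coming from the presentation $[C(\Sigma)/G(\mbf{\Sigma})]$: the middle term is the pullback of the tangent bundle of the ambient affine space $\Spec S$, and the trivial summand is the Lie algebra of $G(\mbf{\Sigma})$ acting. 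Consequently the total Chern class is $c(\mathbb{T}) = \prod_{\rho \in \Sigma(1)} (1 + x_\rho)$ in $A^*(\ts{\Sigma})_\QQ$, so that $c_n(\mathbb{T})$ is the degree-$n$ part of this product, namely the sum over all size-$n$ subsets of rays of the corresponding squarefree monomial, where $n = \dim \Sigma$.

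The next step is to observe that, because $\Sigma$ is \emph{simplicial}, any squarefree monomial $x_{\rho_1}\cdots x_{\rho_n}$ of degree $n$ is either zero (if $\{\rho_1,\ldots,\rho_n\}$ is not contained in a cone of $\Sigma$, by the Stanley--Reisner relations in Proposition \ref{prop:chow-ring}) or else $\rho_1,\ldots,\rho_n$ are exactly the rays of a maximal cone $\s \in \Sigma_{\max}$ (since a simplicial cone containing $n$ distinct rays in an $n$-dimensional fan must itself be $n$-dimensional and maximal). Thus $c_n(\mathbb{T}) = \sum_{\s \in \Sigma_{\max}} \prod_{\rho \in \s(1)} x_\rho$. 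Now I would apply Proposition \ref{prop:dan-isom-thm-toric}: for each maximal cone $\s$, $\phi\bigl(\prod_{\rho \in \s(1)} x_\rho\bigr) = \frac{1}{D_{\s,\mbf{\Sigma}}}[V(\s)]$, and since $\s$ is maximal, $V(\s)$ is a (reduced) point of $X(\Sigma)$, so $\int_{\ts{\Sigma}} \prod_{\rho \in \s(1)} x_\rho = \deg\bigl(\frac{1}{D_{\s,\mbf{\Sigma}}}[\pt]\bigr) = \frac{1}{D_{\s,\mbf{\Sigma}}}$. Summing over $\s \in \Sigma_{\max}$ gives $\chi(\ts{\Sigma}) = \sum_{\s \in \Sigma_{\max}} \frac{1}{D_{\s,\mbf{\Sigma}}}$, as claimed.

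The main obstacle is justifying the Euler sequence and the resulting formula $c(\mathbb{T}) = \prod_\rho(1+x_\rho)$ in the stacky setting. For toric varieties this is classical (e.g. the generalized Euler sequence in \cite{CLS:11}), and for toric DM stacks it follows from the Cox quotient presentation: one checks that on the Cox space $C(\Sigma)$ the $G(\mbf{\Sigma})$-equivariant tangent bundle of $\Spec S$ is $\bigoplus_\rho \mc{O}(V(X_\rho))$ with the evident linearization, that its equivariant Chern classes are $\prod_\rho(1+x_\rho)$ under the identification of Proposition \ref{prop:chow-ring}, and that quotienting out the free $G(\mbf{\Sigma})$-action contributes only a trivial bundle of rank $r = \dim G(\mbf{\Sigma}) = n - d$ (where $d = \dim \Sigma$), which has trivial Chern classes; alternatively one may pull back along the coarse space map and use the isomorphism $\phi$ on rational Chow rings, noting that $\mathbb{T}_{\ts{\Sigma}}$ and $\mathbb{T}_{X(\Sigma)}$ agree generically. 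Everything after that is a direct combinatorial bookkeeping exercise, so I would keep it brief.
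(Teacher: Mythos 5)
Your proposal is correct and is essentially the paper's own proof: the paper likewise uses the Cox presentation $[C(\Sigma)/G(\mbf{\Sigma})]$ to identify the equivariant tangent bundle as $\bigoplus_\rho \mc{O}(V(X_\rho))$ (so $c(\mathbb{T})=\prod_\rho(1+x_\rho)$, the Lie algebra of the diagonalizable group contributing trivially), kills the squarefree monomials whose rays do not span a cone via the Stanley--Reisner relations of Proposition \ref{prop:chow-ring}, and evaluates each surviving monomial as $\frac{1}{D_{\s,\mbf{\Sigma}}}$ using Proposition \ref{prop:dan-isom-thm-toric}. (As you implicitly realize, Theorem \ref{prop:general-equiv-self-intersection} is never actually needed here, since every contributing monomial is squarefree; also beware your notational slip using $n$ both for the number of rays and for $\dim\Sigma$.)
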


\begin{proof}
Let $\mc{X}=\mc{X}(\mathbf{\Sigma)}=[C/G]$ where $C=C(\Sigma) \subseteq \mathbb{A}^n=\Spec \CC[X_1, \ldots, X_n]$ is the Cox space of $\Sigma$, and $G=G(\mbf{\Sigma})$ is as in Section \ref{subsec:Cox-constr}. The coarse moduli space of $\mc{X}$ is $X=X(\Sigma)$, the toric variety associated to $\Sigma$. 
The equivariant tangent bundle $T_C$ is $\oplus \mc{O}(X_i)$ (\cite[p.3751]{EdGr:03})
so $$\chi(\mc{X}(\mbf{\Sigma})) = \int_{C} \prod_{i=1}^n (1+x_i) = \int_X \phi(\prod (1+x_i))$$

If $v_{i_1}, \ldots, v_{i_s}$ are not contained in a cone of $\Sigma$, then $[V(X_{i_1}, \ldots, X_{i_s})]_G =0$ in $A^*_G(C)$ (Proposition \ref{prop:chow-ring}). Hence if we expand the product $\prod_{i=1}^n (1+x_i)$, the only monomials $x_{i_1} \cdots x_{i_s}$ that can contribute to $\int_X \phi(\prod 1+x_i)$ are ones where $v_{i_1}, \ldots, v_{i_s}$ span a maximal cone (we require maximal cones since the integral only considers the dimension $0$ part of the cycle). In this case, the image of $V(X_{i_1}, \ldots, X_{i_s})$ is a point in $X(\Sigma)$, and Prop. \ref{prop:dan-isom-thm-toric} tells us to assign a factor of $\frac{1}{D_{{\s}, \mbf{\Sigma}}}$ to the class $[\pt]$ of this point. The usual identification of  $A_0(X(\Sigma))$ with $\ZZ$ sending $[\pt] \in A^*(X(\Sigma))$ to $1 \in \ZZ$ gives the result.
\end{proof}

\begin{remark}\label{rem:simplicial-euler-char} 
Note that Prop. \ref{prop:simplicial-euler-char} recovers the result (e.g. \cite[Theorem 12.3.9]{CLS:11}) that the Euler characteristic of a smooth toric variety is the number of maximal cones. 
\end{remark}

The following result gives expresses how the Euler characteristic increases after a stacky star subdivision \cite[Definition 4.1]{EdMo:10}
of a simplicial stacky fan.

\begin{lemma}
  Let $\mathbf{\Sigma}=(\ZZ^d, \Sigma, \{v_1, \ldots, v_n\})$ be a
  complete simplicial stacky fan and let $\s$ be a cone in
  $\Sigma$. Let $\mathbf{\Sigma_\s}=(\ZZ^d, \Sigma_\s, \{v_0, v_1,
  \ldots, v_n\})$ be the stacky fan formed by stacky star subdivision
  of $\mathbf{\Sigma}$ with respect to $\s$. Then $\chi(\ts{\Sigma_\s})=\chi(\ts{\Sigma}) + \frac{s-1}{D_{\s, \mbf{\Sigma}}}$, where
  $s=\dim \s$.
\end{lemma}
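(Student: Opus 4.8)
The plan is to evaluate both Euler characteristics with the combinatorial formula of Proposition \ref{prop:simplicial-euler-char} and to track which maximal cones, carrying which stacky multiplicities, are destroyed and created by the stacky star subdivision at $\s$.

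\textbf{Step 1: the change in the set of maximal cones.} Write the rays of $\s$ as $\rho_1,\dots,\rho_s$ with chosen lattice points $v_1,\dots,v_s$, so that the new ray $\rho_0$ of $\Sigma_\s$ carries the lattice point $v_0=v_1+\dots+v_s$. A maximal ($d$-dimensional) cone $\tau\in\Sigma_{\max}$ is altered by the subdivision exactly when $\s$ is a face of $\tau$; writing such a cone as $\tau=\langle\rho_1,\dots,\rho_s,\rho_{s+1},\dots,\rho_d\rangle$, it is replaced in $\Sigma_\s$ by the $s$ maximal cones $\tau_j=\langle\rho_0,\rho_1,\dots,\widehat{\rho_j},\dots,\rho_d\rangle$, $j=1,\dots,s$, while every maximal cone of $\Sigma$ not having $\s$ as a face is carried over to $\Sigma_\s$ with the same generators, hence the same multiplicity. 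Since these account for all of $(\Sigma_\s)_{\max}$, Proposition \ref{prop:simplicial-euler-char} gives
\[
\chi(\ts{\Sigma_\s})-\chi(\ts{\Sigma})=\sum_{\substack{\tau\in\Sigma_{\max}\\ \s\preceq\tau}}\Bigl(\sum_{j=1}^{s}\frac{1}{D_{\tau_j,\mbf{\Sigma_\s}}}-\frac{1}{D_{\tau,\mbf{\Sigma}}}\Bigr).
\]

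\textbf{Step 2: multiplicities of the new cones.} This is the computational core, and I expect it to go through cleanly. Each $\tau_j$ is again simplicial, and being $d$-dimensional it has $N_{\tau_j}=N=\ZZ^d$, so $D_{\tau_j,\mbf{\Sigma_\s}}=\lvert\det(v_0,v_1,\dots,\widehat{v_j},\dots,v_d)\rvert$ and similarly $D_{\tau,\mbf{\Sigma}}=\lvert\det(v_1,\dots,v_d)\rvert$. Substituting $v_0=\sum_{i=1}^{s}v_i$ and expanding by multilinearity, each $v_i$ with $i\le s$, $i\ne j$, already occurs among the remaining columns of $\tau_j$, so its term vanishes and only the $v_j$-term survives; hence $\det(v_0,v_1,\dots,\widehat{v_j},\dots,v_d)=\pm\det(v_1,\dots,v_d)$, i.e. $D_{\tau_j,\mbf{\Sigma_\s}}=D_{\tau,\mbf{\Sigma}}$ for all $j$. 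The bracket in Step 1 thus equals $\frac{s}{D_{\tau,\mbf{\Sigma}}}-\frac{1}{D_{\tau,\mbf{\Sigma}}}=\frac{s-1}{D_{\tau,\mbf{\Sigma}}}$, so
\[
\chi(\ts{\Sigma_\s})-\chi(\ts{\Sigma})=(s-1)\sum_{\substack{\tau\in\Sigma_{\max}\\ \s\preceq\tau}}\frac{1}{D_{\tau,\mbf{\Sigma}}}.
\]

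\textbf{Step 3: collapsing the sum, the main obstacle.} What remains is the identity $\sum_{\s\preceq\tau\in\Sigma_{\max}}D_{\tau,\mbf{\Sigma}}^{-1}=D_{\s,\mbf{\Sigma}}^{-1}$, and this is the step I expect to be the real obstacle. When $\s$ is itself a maximal cone --- the situation occurring in the $3$-dimensional application, where the non-simplicial cones being resolved are $3$-dimensional --- the sum reduces to the single term $\tau=\s$ and the identity is immediate, completing the proof. For a general cone $\s$ my plan is to pass to the quotient lattice $N(\s)=N/N_\s$ (here $N_\s$ is the set of lattice points in the linear span of $\s$, hence is saturated, so $N(\s)$ is free) and to pick a $\ZZ$-basis of $N$ adapted to $N_\s$: in such coordinates the determinant computing $D_{\tau,\mbf{\Sigma}}$ is block lower-triangular, factoring as $D_{\tau,\mbf{\Sigma}}=D_{\s,\mbf{\Sigma}}\cdot[N(\s):\ZZ\langle\bar v_{\rho_{s+1}},\dots,\bar v_{\rho_d}\rangle]$, where bars denote images in $N(\s)$ and $\rho_{s+1},\dots,\rho_d$ are the rays of $\tau$ outside $\s$. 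This reduces the claim to an assertion about the induced complete simplicial stacky fan on the star of $\s$ in $N(\s)_\RR$, whose maximal cones are the images $\bar\tau$; evaluating Proposition \ref{prop:simplicial-euler-char} for that fan is the heart of what remains.
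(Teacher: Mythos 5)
Your Steps 1 and 2 are correct, and in fact more careful than the paper's own argument: the determinant expansion giving $D_{\tau_j,\mbf{\Sigma_\s}}=D_{\tau,\mbf{\Sigma}}$ is exactly the ``properties of determinants'' step in the paper, and your bookkeeping correctly records that \emph{every} maximal cone $\tau$ containing $\s$ (not just $\s$ itself) gets subdivided into $s$ pieces. But the identity you isolate in Step 3, $\sum_{\s\subseteq\tau\in\Sigma_{\max}}D_{\tau,\mbf{\Sigma}}^{-1}=D_{\s,\mbf{\Sigma}}^{-1}$, is false in general, so that step cannot be carried out: by your own block-triangular factorization $D_{\tau,\mbf{\Sigma}}=D_{\s,\mbf{\Sigma}}\cdot[N(\s):\ZZ\langle \bar v_{\rho_{s+1}},\dots,\bar v_{\rho_d}\rangle]$, the left-hand side equals $D_{\s,\mbf{\Sigma}}^{-1}$ times the Euler characteristic (in the sense of Proposition \ref{prop:simplicial-euler-char}) of the induced complete $(d-s)$-dimensional stacky star fan in $N(\s)$, and that number need not be $1$. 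Concretely, let $\Sigma$ be the smooth fan of $(\Pro^1)^3$ with rays through $\pm e_1,\pm e_2,\pm e_3$ and take $\s=\langle e_1,e_2\rangle$, so $v_0=e_1+e_2$ and $D_{\s,\mbf{\Sigma}}=1$: then $\chi(\ts{\Sigma})=8$, and after the subdivision all ten maximal cones are smooth, so $\chi(\ts{\Sigma_\s})=10=\chi(\ts{\Sigma})+(s-1)\bigl(\tfrac{1}{1}+\tfrac{1}{1}\bigr)$, whereas the lemma predicts $9$.

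So the real gap is in the statement rather than in your argument: what your Steps 1--2 prove is $\chi(\ts{\Sigma_\s})=\chi(\ts{\Sigma})+(s-1)\sum_{\s\subseteq\tau\in\Sigma_{\max}}D_{\tau,\mbf{\Sigma}}^{-1}$, which agrees with the asserted formula precisely when $\s$ is itself a maximal (full-dimensional) cone, the only case in which the sum has the single term $\tau=\s$. The paper's proof tacitly works in that case: it only considers the cones $\s_i=\langle v_0,v_1,\dots,\widehat{v_i},\dots,v_s\rangle$ lying inside $\s$, checks $D_{\s_i,\mbf{\Sigma_\s}}=D_{\s,\mbf{\Sigma}}$, and then invokes Proposition \ref{prop:simplicial-euler-char}, which sums over \emph{maximal} cones; this accounts for all the cones that change only when $s=d$. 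The right fix is therefore not to pursue Step 3 (your star-fan reduction shows it amounts to the generally false claim that the star fan has Euler characteristic one), but to keep Steps 1--2 as the proof and state the conclusion either in the corrected general form above or under the additional hypothesis that $\s$ is maximal.
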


\begin{proof}
Label the $v_i$ such that $\s=\langle v_1, \ldots, v_s\rangle$. Hence $v_0=v_1 + \cdots + v_s$. For $1\leq i \leq s$, let $\s_i =\langle v_1, \ldots, \widehat{v_i}, \ldots, v_s\rangle$ be the cone of $\Sigma_\s$ spanned by $v_0$ and all the rays of $\s$ except for $\langle v_i \rangle$. Then by properties of determinants, we have $D_{\s_i, \mbf{\Sigma_\s}}=D_{\s, \mbf{\Sigma}}$. Then apply Prop. \ref{prop:simplicial-euler-char}.
\end{proof}

\section{Integration on Artin toric stacks}\label{subsec:Artin-integrals}
We now come to the main definitions of the paper.  Let ${\mathbf
  \Sigma}$ be a (not necessarily simplicial) stacky fan and let
${\mathcal X}({\mathbf \Sigma})$ be the associated Artin toric stack. By
\cite[Theorem 5.2]{EdMo:10} there is a simplicial stacky fan
$\mbf{\Sigma'}$ canonically obtained from ${\mathbf \Sigma}$ by stacky
star subdivisions and a commutative diagram of stacks and toric
varieties
$$\begin{array}{ccc}
{\mathcal X}({\mathbf \Sigma'}) & \stackrel{f}\to & {\mathcal X}({\mathbf \Sigma})\\
\pi'\downarrow & & \pi \downarrow\\
X(\Sigma') & \stackrel{g}\to & X(\Sigma)
\end{array}
$$
where $f$ is birational, $\pi'$ is finite and $g$ is proper and birational.
\begin{definition}
  If $\alpha \in A^*({\mathcal X}({\mathbf \Sigma}))$ define $\pi_*
  \alpha := g_* \pi'_* f^* \alpha$. In particular if the associated
  toric variety $X(\Sigma)$ is complete we define for any $\alpha \in
  A^d({\mathcal X}({\mathbf \Sigma}))$ 
$$\int_{{\mathcal X}({\mathbf
      \Sigma})} \alpha := \int_{{\mathcal X}({\mathbf \Sigma'})}f^* \alpha$$
\end{definition}

\subsection{The Euler characteristic of an Artin toric stack with complete good moduli space}
We now turn to the definition of the Euler characteristic of an Artin
toric stack. If ${\mathcal X} = [X/G]$ is a smooth Artin stack then
the tangent bundle stack to ${\mathcal X}$ equals the vector bundle
stack ${\mathbb T}{\mathcal X}=[TX/\Lie(G)]$. The restriction of
${\mathbb T}{\mathcal X}$ to the open set ${\mathcal X}^{DM}$ where
${\mathcal X}$ is Deligne-Mumford is the usual tangent bundle to
${\mathcal X}^{DM}$.  Identifying the Chow groups of ${\mathcal X}$
with the $G$-equivariant Chow groups of $X$ we may define the Chern
series of ${\mathbb T}{\mathcal X}$ as the formal series $c_t({\mathbb
  T}{\mathcal X}) = c_t(TX)c_t(\Lie(G))^{-1} \in \prod_{i=0}^\infty
A^i_G(X) \otimes \ZZ[[t]]$. Moreover, when $G$ is diagonalizable the
Lie algebra of $G$ is a trivial $G$-module so $c_t(\Lie(G)) = 1$ so
the Chern series of ${\mathbb T}{\mathcal X}$ is actually a polynomial.

We can extend the definition of the Euler characteristic to toric Artin stacks.
\begin{definition}
Let ${\mathbf \Sigma}$ be a stacky fan and suppose that associated toric variety
$X(\Sigma)$ has dimension $d$ then we set
$\chi({\mathcal X}({\bf \Sigma})) := \int_{{\mathcal X}({\mathbf \Sigma})}c_d({\mathbb T}{\mathcal X})$.
\end{definition}

\subsection{Formulas for Euler characteristics of 3-dimensional toric Artin stacks}

As an application of Theorem
\ref{prop:general-equiv-self-intersection}, we derive a formula for
the Euler characteristic of a 3-dimensional toric Artin stack.

\begin{lemma}
Let $\mathbf{\Sigma}=(\ZZ^3, \Sigma, \{v_1, \ldots, v_n\})$ be a $3$-dimensional complete stacky fan with only one nonsimplicial cone, call it $\s$, and label the $v_i$ so that $\s=\RR_{\geq 0}\langle v_1, \ldots, v_s\rangle$. Let $\mathbf{\Sigma_\s}=(\ZZ^3, \Sigma_\s, \{v_0, v_1, \ldots, v_n\})$ be the stacky fan formed by stacky star subdivision of $\mathbf{\Sigma}$ with respect to $\s$, and let $f:\ts{\Sigma_\s} \to \ts{\Sigma}$ be the induced morphism. For $0 \leq i\leq n$, let $y_i \in A^*(\ts{\Sigma_\s})$ be the equivariant fundamental class of the coordinate hyperplanes restricted to Cox space $C(\Sigma_\s)$. Let $\Sigma_\s(3) \smallsetminus \Sigma(3)$ be the set of maximal cones in $\Sigma_\s$ that are not in $\Sigma$. Then the difference between the Euler characteristic of $\ts{\Sigma}$ and the Euler characteristic of its simplicialization $\ts{\Sigma_\s}$ is given by 
\begin{align} \label{eqn:difference-step1}
&\chi(\ts{\Sigma})-\chi(\ts{\Sigma_\s}) = \\
&\sum_{\tau \in \Sigma_\s(3) \smallsetminus \Sigma(3)}\frac{s-3}{D_{\tau, \Sigma_\s}} +   {s-1 \choose 2}\int_{\ts{\Sigma_\s}}y_0^2(\sum_{1\leq i \leq s} y_i) + {s \choose 3}\int_{\ts{\Sigma_\s}}y_0^3.
\end{align}
\end{lemma}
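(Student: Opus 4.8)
The plan is to compare $\chi(\ts{\Sigma})$ and $\chi(\ts{\Sigma_\s})$ by working entirely inside $A^*(\ts{\Sigma_\s})_\QQ$, using the fact that $f$ is birational so that $\chi(\ts{\Sigma}) = \int_{\ts{\Sigma}} c_3(\mathbb{T}\ts{\Sigma}) = \int_{\ts{\Sigma_\s}} f^* c_3(\mathbb{T}\ts{\Sigma})$ by the definition of the Artin-stack integral. First I would write down the total Chern class of the tangent stack on both sides via the $\oplus \mc{O}(X_i)$ description of the equivariant tangent bundle on the Cox space (as in the proof of Proposition \ref{prop:simplicial-euler-char}): on $\ts{\Sigma_\s}$ the relevant class is $\prod_{i=0}^{n}(1+y_i)$, while $f^*$ of the corresponding class on $\ts{\Sigma}$ is $\prod_{i=1}^{n}(1+y_i)$ — the only difference is the missing factor $(1+y_0)$ coming from the new ray through $v_0$. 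Hence
$$\chi(\ts{\Sigma_\s}) - \chi(\ts{\Sigma}) = \int_{\ts{\Sigma_\s}} \Big( \prod_{i=0}^n (1+y_i) - \prod_{i=1}^n (1+y_i)\Big) = \int_{\ts{\Sigma_\s}} y_0 \prod_{i=1}^n (1+y_i),$$
since $\prod_{i=0}^n(1+y_i) - \prod_{i=1}^n(1+y_i) = y_0\prod_{i=1}^n(1+y_i)$. Only the degree-$3$ part contributes, so we must extract the degree-$2$ part of $\prod_{i=1}^n(1+y_i)$ and multiply by $y_0$; that is, $\chi(\ts{\Sigma_\s}) - \chi(\ts{\Sigma}) = \int_{\ts{\Sigma_\s}} y_0 \cdot e_2(y_1,\dots,y_n)$ where $e_2$ is the second elementary symmetric polynomial.

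The next step is to analyze which monomials $y_0 y_i y_j$ (with $1 \le i < j \le n$) survive integration. Because $y_{i_1}\cdots y_{i_k} = 0$ unless the corresponding rays span a cone of $\Sigma_\s$ (Proposition \ref{prop:chow-ring}), and because the integral only sees the zero-dimensional part, the surviving terms $y_0 y_i y_j$ are exactly those where $\langle v_0, v_i, v_j\rangle$ is a maximal ($3$-dimensional) cone of $\Sigma_\s$, i.e. $\langle v_0, v_i, v_j \rangle \in \Sigma_\s(3)\smallsetminus\Sigma(3)$. By Proposition \ref{prop:dan-isom-thm-toric} each such monomial integrates to $1/D_{\langle v_0,v_i,v_j\rangle, \mbf{\Sigma_\s}}$. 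Now I would split $e_2(y_1,\dots,y_n)$ according to whether the indices lie in $\{1,\dots,s\}$ (the rays of the subdivided cone $\s$) or not. If $i,j \notin \{1,\dots,s\}$, then $\langle v_0, v_i, v_j\rangle$ is never a cone of $\Sigma_\s$ (the new cones all contain at least two rays of $\s$), so such terms vanish. If exactly one of $i,j$ lies in $\{1,\dots,s\}$, similarly $\langle v_0, v_i, v_j\rangle \notin \Sigma_\s$ since the star-subdivided cones are precisely $\langle v_0 \rangle$ together with a facet of $\s$, which has $s-1$ rays all from $\{v_1,\dots,v_s\}$; when $s=3$ these facets are $2$-dimensional and the relevant cones $\langle v_0, v_i, v_j\rangle$ with $i,j \in \{1,2,3\}$, $i\neq j$ are exactly the elements of $\Sigma_\s(3)\smallsetminus\Sigma(3)$, contributing $\sum_{\tau} 1/D_{\tau,\Sigma_\s}$. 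For $s > 3$ the facets of $\s$ are higher-dimensional, the cones $\langle v_0, v_i, v_j\rangle$ with $i,j \in \{1,\dots,s\}$ are generally not maximal, and the products $y_0 y_i y_j$ must be further reduced using the recursive formula of Theorem \ref{prop:general-equiv-self-intersection}; this, together with the self-intersection terms $y_0^2 y_i$ and $y_0^3$ that appear once one starts eliminating, is where the remaining two terms $\binom{s-1}{2}\int y_0^2(\sum_i y_i)$ and $\binom{s}{3}\int y_0^3$ come from.

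Concretely, I would use the linear relation $v_0 = v_1 + \cdots + v_s$, which in the Chow ring (via Proposition \ref{prop:chow-ring} and the relations $\sum_\rho \langle m, v_\rho\rangle y_\rho = 0$) lets me rewrite sums involving $y_1 + \cdots + y_s$ in terms of $y_0$ and the rays outside $\s$. The cleanest route is probably: the $\binom{s-1}{2}$ pairs of indices $\{i,j\}\subseteq\{1,\dots,s\}$ with both in $\s$ but "matching" a common complementary structure collapse, after applying the recursion on $\Sigma_\s$, into the $y_0^2(\sum_{1\le i\le s} y_i)$ term with multiplicity $\binom{s-1}{2}$, and the residual over-counting of the apex contributes the $\binom{s}{3} y_0^3$ term; meanwhile the $\frac{s-3}{D_{\tau,\Sigma_\s}}$ over each new maximal cone records the difference between the naive count and $\binom{s}{3}$-plus-$\binom{s-1}{2}$ bookkeeping. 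I expect the main obstacle to be precisely this combinatorial accounting: tracking exactly how many times each reduced monomial $y_0^2 y_i$ and $y_0^3$ is produced when the recursion of Theorem \ref{prop:general-equiv-self-intersection} is applied to every surviving $y_0 y_i y_j$, and verifying that the coefficients assemble into the binomial coefficients $\binom{s-1}{2}$ and $\binom{s}{3}$. A useful sanity check at the end is the case $s = 3$, where both binomial terms vanish and the formula must reduce to $\chi(\ts{\Sigma}) - \chi(\ts{\Sigma_\s}) = \sum_{\tau} \frac{s-3}{D_{\tau,\Sigma_\s}} = 0$, consistent with the fact that a star subdivision of a simplicial (already $3$-dimensional, hence the subdivided "cone" is a facet situation) cone behaves as in the preceding Lemma with $s=3$ giving $\frac{s-1}{D_\s} = \frac{2}{D_\s}$ — so in fact I would double-check the sign and normalization conventions against that Lemma before finalizing.
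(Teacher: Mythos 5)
The central step of your argument is wrong: you assert that $f^*\bigl(\prod_{i=1}^n(1+x_i)\bigr)=\prod_{i=1}^n(1+y_i)$, i.e.\ that the only difference between the two Chern classes is the missing factor $(1+y_0)$. In fact $f^*x_i=y_i$ only for the rays \emph{not} in $\s$; for $1\le i\le s$ one has $f^*x_i=y_0+y_i$, because $v_0=v_1+\cdots+v_s$ (on Cox coordinates the stacky star subdivision is $X_i\mapsto Y_0Y_i$ for rays of $\s$, so the total transform of $V(X_i)$ picks up the exceptional divisor with coefficient $1$). The paper's proof consists precisely of expanding $\prod_{i=1}^s(1+y_0+y_i)\prod_{i>s}(1+y_i)$: the cross-terms in the first product are what produce the coefficients $\binom{s-2}{1}$, $\binom{s-1}{2}$, $\binom{s}{3}$ and hence the three terms of the lemma. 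With your pullback the degree-$3$ difference is just $y_0\,e_2(y_1,\dots,y_n)$, whose surviving terms are the squarefree monomials $y_0y_jy_k$ with $\langle v_0,v_j,v_k\rangle\in\Sigma_\s(3)\smallsetminus\Sigma(3)$; that would give $\chi(\ts{\Sigma})-\chi(\ts{\Sigma_\s})=-\sum_\tau 1/D_{\tau,\mbf{\Sigma_\s}}$, which is not the stated formula (and is contradicted by the worked example following Theorem \ref{prop:final-3d-euler-char}). Note also that your computation is set up for $\chi(\ts{\Sigma_\s})-\chi(\ts{\Sigma})$, the opposite sign from the lemma.

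The mechanism you propose for recovering the missing terms cannot repair this. Theorem \ref{prop:general-equiv-self-intersection} applies only to monomials in which some exponent is at least $2$, and it \emph{lowers} exponents; applied to the squarefree classes $y_0y_iy_j$ it is either inapplicable or unnecessary (when $\langle v_0,v_i,v_j\rangle$ is a maximal cone the integral is $1/D$ directly by Proposition \ref{prop:dan-isom-thm-toric}, and otherwise the class is zero). So no amount of ``further reduction'' of your expression can create the self-intersection terms $\binom{s-1}{2}\int y_0^2(\sum_i y_i)$ and $\binom{s}{3}\int y_0^3$; these must be present from the start, and they come exactly from the $y_0$-summands in the factors $(1+y_0+y_i)$ that your pullback formula omits. (A minor additional point: since $\s$ is a nonsimplicial $3$-dimensional cone, $s\ge 4$ automatically, so the $s=3$ ``sanity check'' at the end is vacuous.) The correct proof is short once the pullback is right: expand the product, use $y_0y_i=0$ for $i>s$ and the Stanley--Reisner vanishing, recognize $\sum_{0<i<j<k}y_iy_jy_k+\sum_{0<j<k\le s}y_0y_jy_k$ as $\chi(\ts{\Sigma_\s})$, and observe that the leftover multiplicity $(s-2)-1=s-3$ on the classes $y_0y_jy_k$ accounts for the first sum in the lemma.
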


\begin{proof}
  With the notation as in \ref{subsec:Cox-constr}, for $1 \leq i \leq
  s$, we have $f^*(x_i)=y0+y_i$. Also, for $i \geq s$,
  $f^*(x_i)=y_i$. Then $\chi(\mc{X}(\mbf{\Sigma})) =
  \int_{\ts{\Sigma_\s}} f^*(\prod_{i=1}^n (1+x_i)) =
  \int_{\ts{\Sigma_\s}} (\prod_{i=1}^s (1+y_0+y_i))\prod_{i>s}
  (1+y_i)$

  By Proposition \ref{prop:chow-ring} and Equations
  \ref{eqn:stanley-reisner-ring}, \ref{eqn:stanley-reisner-ideal}, we
  have that $y_{i_1}\cdots y_{i_t} =0$ in $A^*_G(C)$ if $v_{i_1},
  \ldots, v_{i_t}$ are not contained in a cone. In particular,
  $y_0y_i=0$ for any $i>s$. Hence when we expand $(\prod_{i=1}^s
  (1+y_0+y_i))\prod_{i>s} (1+y_i)$ and collect the degree $3$, we get
  (note that some monomials in the following equation are zero):
$$\int_{\ts{\Sigma_\s}} \left( \sum_{0<i <j< k}  y_iy_jy_k + \sum_{0<j< k\leq s}  {s-2 \choose 1}y_0y_jy_k + \sum_{0< k \leq s}  {s-1 \choose 2}y_0^2y_k + {s \choose 3}y_0^3 \right)$$
We have $(\sum_{0<i <j< k}  y_iy_jy_k) + (\sum_{0<j< k\leq s}  y_0y_jy_k) = \chi(\ts{\Sigma_\s})$.  
\end{proof}

\begin{remark}\label{rem:more-than-one-cone}
The above proof easily extends to the case when $\Sigma$ has more than one nonsimplicial cone. Suppose  $\s_1$ and $\s_2$ are two different nonsimplicial cones of $\Sigma$, then the above proof shows that the equivariant fundamental class of the exceptional divisor of the blowup of $V(\s_1)$ (resp. $V(\s_2)$) has nonzero product only with divisors coming from rays of $\s_1$ (resp. $\s_2$).   
\end{remark}

By applying Theorem \ref{prop:general-equiv-self-intersection}, we can give a combinatorial formula to the self-intersection integrals on the right hand side of equation \ref{eqn:difference-step1}.

Fix $i$ satisfying $1 \leq i \leq s$. We will give a formula for $\int_{\ts{\Sigma_\s}} y_0^2y_i$.
Let $v_i^+$ and $v_i^-$ be the two lattice vectors (among $v_1, \ldots, v_s$) such that $\tau_i^+=\RR_{\geq 0}\langle v_i^+, v_0, v_i \rangle$ and $\tau_i^-=\RR_{\geq 0}\langle v_i^-, v_0, v_i \rangle$ are the two maximal cones in $\Sigma_\s$ having $\gamma_i=\RR_{\geq 0}\langle  v_0, v_i \rangle$ as a common face.
Since $\tau_i^+$ and $\tau_i^-$ are simplicial, there is a relation
\begin{equation}
\beta_i^+v_i^+ + b_{0,i}v_0 + b_iv_i + \beta_i^-v_i^- =0
\end{equation}
where $\beta_i^+, b_{0,i}, b_i, \beta_i^-$ are rational numbers. Then by Cor. \ref{cor:cox-result} 

\begin{equation}\label{eqn:double-self-intersection}
\int_{\ts{\Sigma_\s}} y_0^2y_i=\frac{b_{0,i}}{\beta_i^+D_{\tau_i^+, \mbf{\Sigma_\s}}}=\frac{b_{0,i}}{\beta_i^-D_{\tau_i^-, \mbf{\Sigma_\s}}}.
\end{equation}

Finally, to compute the triple self-intersection integral
$\int_{\ts{\Sigma_\s}}y_0^3$, one could use
Theorem \ref{prop:general-equiv-self-intersection}. However, because we
have restricted the dimension of $\Sigma$ to $3$, we can instead use a
relation of rational equivalence, and reduce the computation to the
case of double self intersection integrals just computed. Namely, pick
some $m_\s \in M$ such that $\langle m_\s, v_0 \rangle \neq 0$ (there
exists such an $m_\s$ since $v_0 \neq 0$ since $v_0$ lies in the
interior of $\s$). Then the rational equivalence relation
$div(\chi^{m_\s})=0$ can be written as $\sum_{0 \leq i \leq n} \langle
m_\s, v_i \rangle y_i = 0$. Hence
$$y_0=\frac{-1}{\langle m_\s, v_0 \rangle} \sum_{1 \leq i \leq n} \langle m_\s, v_i \rangle y_i.$$ 
Multiplying by $y_0^2$ and using the fact that $y_0y_i =0 $ for $i>s$ gives

$$\int_{\ts{\Sigma_\s}}y_0^3=\frac{-1}{\langle m_\s, v_0 \rangle} \int_{\ts{\Sigma_\s}}\sum_{1 \leq i \leq s} \langle m_\s, v_i \rangle y_0^2y_i$$

Using Remark \ref{rem:more-than-one-cone}, we combine the above formulas to deduce the following result:

\begin{theorem}\label{prop:final-3d-euler-char}
Let $\mathbf{\Sigma}=(\ZZ^3, \Sigma, \{ v_\rho \}_{\rho \in \Sigma(1)})$ be a $3$-dimensional complete stacky fan. Let ${NS}$ denote the set of nonsimplicial cones of $\Sigma$. Let $\mathbf{\Sigma_{simp}}=(\ZZ^3, \Sigma_{simp}, \{ v_\rho \}_{\rho \in \Sigma_{simp}(1)})$ be the stacky fan formed by stacky star subdivision of $\mathbf{\Sigma}$ with respect to the cones in ${NS}$, and let $f:\ts{\Sigma_{simp}} \to \ts{\Sigma}$ be the induced morphism. 

For $\s \in NS$, let $s_\s = \vert \s(1) \vert$ be the number of rays in $\s$, let $v_\s=\sum_{\rho \in \s(1)} v_\rho$, let $m_\s \in M$ be such that $\langle m_\s, v_\s \rangle \neq 0$, and regard $\Sigma_\s \subseteq \Sigma_{simp}$. For each $\rho \in \s(1)$ let $v_\rho^+$ and $v_\rho^-$ be the two lattice vectors such that $\tau_\rho^+=\RR_{\geq 0}\langle v_\rho^+, v_\s, v_\rho \rangle$ and $\tau_\rho^-=\RR_{\geq 0}\langle v_\rho^-, v_\s, v_\rho \rangle$ are the two maximal cones in $\Sigma_\s$ having $\gamma_\rho=\RR_{\geq 0}\langle  v_\s, v_\rho \rangle$ as a common face. Let $\beta_\rho^+, b_{\s,\rho}, b_\rho, \beta_\rho^-$ be rational numbers such that
\begin{equation}\label{eqn:wall-crossing}
\beta_\rho^+v_\rho^+ + b_{\s,\rho}v_\s + b_\rho v_\rho + \beta_\rho^-v_\rho^- =0
\end{equation}

Let $\Sigma_{\s}(3) \smallsetminus \Sigma(3)$ be the set of maximal cones in $\Sigma_{\s}$ that are not in $\Sigma$. Let $D_{\s, \mbf{\Sigma}}$ be as in Definition \ref{def:stacky-multiplicity}. Then the difference between the Euler characteristic of $\ts{\Sigma}$ and the Euler characteristic of its simplicialization $\ts{\Sigma_{simp}}$ is given by 

\begin{align*} 
&\chi(\ts{\Sigma})-\chi(\ts{\Sigma_{simp}}) = \\
&\sum_{\s \in NS} 
\left( \sum_{\tau \in \Sigma_{\s}(3) \smallsetminus \Sigma(3)}\frac{s_\s-3}{D_{\tau, \Sigma_\s}} +  \sum_{\rho \in \s(1)} \left({s_\s-1 \choose 2} - {s_\s \choose 3}\frac{\langle m_\s, v_\rho 
\rangle}{\langle m_\s, v_\s \rangle}\right) \frac{b_{\s,\rho}}{\beta_\rho^+D_{\tau_\rho^+, \mbf{\Sigma_\s}}} \right)
\end{align*}
\end{theorem}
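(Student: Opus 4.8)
The plan is to assemble the formula by combining the single-cone computation (the Lemma preceding this Theorem, with its equation \eqref{eqn:difference-step1}) with the multi-cone bookkeeping licensed by Remark \ref{rem:more-than-one-cone}, and then to substitute the explicit self-intersection formulas \eqref{eqn:double-self-intersection} and the $y_0^3$-reduction into the right-hand side. First I would observe that, by Remark \ref{rem:more-than-one-cone}, the exceptional classes associated to distinct nonsimplicial cones $\s_1,\s_2 \in NS$ have vanishing products with each other's divisor classes in $A^*(\ts{\Sigma_{simp}})$; consequently the degree-$3$ part of $f^*\bigl(\prod_\rho(1+x_\rho)\bigr)$ decomposes as a sum, indexed by $\s \in NS$, of contributions of exactly the shape analyzed in the single-cone Lemma. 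Thus $\chi(\ts{\Sigma}) - \chi(\ts{\Sigma_{simp}})$ is the sum over $\s \in NS$ of the right-hand side of \eqref{eqn:difference-step1}, with $s$ replaced by $s_\s$, with $y_0$ replaced by the exceptional class $y_\s$ of $V(\s)$, and with the $y_i$ ($1\le i\le s$) replaced by the classes $y_\rho$ for $\rho \in \s(1)$.

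Next I would handle each of the three summands in (the $\s$-indexed copy of) \eqref{eqn:difference-step1} separately. The first summand $\sum_{\tau \in \Sigma_\s(3)\smallsetminus\Sigma(3)} (s_\s-3)/D_{\tau,\Sigma_\s}$ is already in final form and is carried over verbatim. For the second summand, I apply equation \eqref{eqn:double-self-intersection} — which itself is Corollary \ref{cor:cox-result} applied to the wall $\gamma_\rho = \RR_{\ge 0}\langle v_\s, v_\rho\rangle$ shared by the two maximal cones $\tau_\rho^{\pm}$ — to rewrite each $\int_{\ts{\Sigma_\s}} y_\s^2 y_\rho$ as $b_{\s,\rho}/(\beta_\rho^+ D_{\tau_\rho^+,\mbf{\Sigma_\s}})$, using the relation \eqref{eqn:wall-crossing}. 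This turns the $\binom{s_\s-1}{2}\int y_\s^2(\sum_\rho y_\rho)$ term into $\sum_{\rho\in\s(1)} \binom{s_\s-1}{2}\, b_{\s,\rho}/(\beta_\rho^+ D_{\tau_\rho^+,\mbf{\Sigma_\s}})$. For the third summand, I use the rational-equivalence relation $\Div(\chi^{m_\s}) = 0$, i.e. $\sum_{i}\langle m_\s, v_i\rangle y_i = 0$ in $A^*(\ts{\Sigma_\s})$; solving for $y_\s$ (legitimate because $\langle m_\s, v_\s\rangle \ne 0$), multiplying by $y_\s^2$, and discarding the vanishing products $y_\s y_\rho = 0$ for rays $\rho$ not in $\s$, I get
\[
\int_{\ts{\Sigma_\s}} y_\s^3 \;=\; \frac{-1}{\langle m_\s, v_\s\rangle}\sum_{\rho\in\s(1)}\langle m_\s, v_\rho\rangle \int_{\ts{\Sigma_\s}} y_\s^2 y_\rho,
\]
and then I apply \eqref{eqn:double-self-intersection} again to each term on the right. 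Substituting back, the $\binom{s_\s}{3}\int y_\s^3$ term becomes $-\sum_{\rho\in\s(1)} \binom{s_\s}{3}\,(\langle m_\s,v_\rho\rangle/\langle m_\s,v_\s\rangle)\, b_{\s,\rho}/(\beta_\rho^+ D_{\tau_\rho^+,\mbf{\Sigma_\s}})$.

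Finally I collect the two $\rho$-indexed sums, factoring out the common $b_{\s,\rho}/(\beta_\rho^+ D_{\tau_\rho^+,\mbf{\Sigma_\s}})$, to obtain the bracketed coefficient $\binom{s_\s-1}{2} - \binom{s_\s}{3}\langle m_\s,v_\rho\rangle/\langle m_\s,v_\s\rangle$, and sum over $\s \in NS$; this is exactly the claimed formula. The one point needing genuine care — the main obstacle — is the reduction to the single-cone Lemma: I must check that when $\Sigma$ has several nonsimplicial cones and $\Sigma_{simp}$ is their simultaneous stacky star subdivision, the cross terms between different exceptional classes really do vanish and, just as importantly, that the local combinatorial data ($D_{\tau,\Sigma_\s}$, the vectors $v_\rho^\pm$, the coefficients in \eqref{eqn:wall-crossing}) computed inside each $\Sigma_\s$ agrees with what one sees inside the global fan $\Sigma_{simp}$. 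Both follow from Remark \ref{rem:more-than-one-cone} together with the fact that stacky star subdivision is a local operation at the cone $\s$ and leaves the rest of the fan — in particular the maximal cones meeting the wall $\gamma_\rho$ — unchanged, but it is worth spelling out that $\tau_\rho^\pm$ are genuinely the two maximal cones of $\Sigma_{simp}$ (equivalently of $\Sigma_\s$) adjacent across $\gamma_\rho$, so that Corollary \ref{cor:cox-result} applies verbatim.
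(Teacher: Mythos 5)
Your proposal is correct and follows essentially the same route as the paper: the paper's own derivation is exactly the single-cone lemma (equation \ref{eqn:difference-step1}) combined with Remark \ref{rem:more-than-one-cone} for several nonsimplicial cones, equation \ref{eqn:double-self-intersection} (Corollary \ref{cor:cox-result} applied across the wall $\gamma_\rho$) for the $y_\s^2 y_\rho$ terms, and the relation $\Div(\chi^{m_\s})=0$ to reduce $\int y_\s^3$ to those same double self-intersections. Your extra attention to the locality of the star subdivision and to the identification of $\tau_\rho^{\pm}$ inside $\Sigma_{simp}$ is a reasonable elaboration of what the paper leaves implicit, not a different argument.
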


As an example, we use this formula to compute the Euler characteristic
the nonsimplicial toric stack considered in Example
\ref{example:triple-intersection}.

\begin{example}\label{example:euler-char-changes}
  We continue with Example \ref{example:triple-intersection}, and let
  $f:\ts{\Sigma_\s} \to \ts{\Sigma}$ be the morphism induced by stacky
  star subdivision of $\Sigma$ with respect to $\s$. The
  multiplicities of the maximal cones are $D125=D235=2$,
  $D345=D145=1$, $D012=D023=7$, $D034=D014=5$. Hence by
  Prop. \ref{prop:simplicial-euler-char}, we
  have $$\chi(\ts{\Sigma_\s})=\frac{1}{7} + \frac{1}{7} + \frac{1}{5}
  + \frac{1}{5}+ \frac{1}{2} + \frac{1}{2} + \frac{1}{1} +
  \frac{1}{1}=\frac{129}{35}.$$ Also, $NS=\{ \s \}$ and $s_\s=4$,
  so $$\sum_{\tau \in \Sigma_{\s}(3) \smallsetminus
    \Sigma(3)}\frac{s_\s-3}{D_{\tau, \Sigma_\s}} = \frac{1}{7} +
  \frac{1}{7} + \frac{1}{5} + \frac{1}{5} = \frac{24}{35}.$$ Since
  $v_\s=v_0=(0,1,4)$, we may take $m_\s=(0,1,0)$ so $\langle m_\s,
  v_\s \rangle = 1$. There are four rays in $\s(1)$, namely $\rho_i =
  \RR_{\geq 0}\langle v_i \rangle$ for $i=1, 2, 3, 4$, and for each of
  them we compute

\begin{equation}\label{eqn:third-term}
\left({s_\s-1 \choose 2} - {s_\s \choose 3}\frac{\langle m_\s, v_\rho 
\rangle}{\langle m_\s, v_\s \rangle}\right) \frac{b_{\s,\rho}}{\beta_\rho^+D_{\tau_\rho^+, \mbf{\Sigma_\s}}}. 
\end{equation}

For $\rho=\rho_1$, we have $\gamma_{\rho_1}=\RR_{\geq 0}\langle v_0, v_1\rangle$, and take $v_\rho^+=v_2$, $v_\rho^-=v_4$, and set $\beta_\rho^+=1$. Then equation \ref{eqn:wall-crossing} becomes 
$v_2 + b_{\s,\rho_1}v_0 + b_{\rho_1} v_1 + \beta_4v_4 =0$. Substituting in the coordinates of each $v_i$, this equation becomes a system of three equations in the three unknowns $b_{\s,\rho_1}, b_{\rho_1}, \beta_4$. Solving we get $b_{\s, \rho_1}=\frac{-3}{5}$ (and $b_{\rho_1}=0, \beta_4=\frac{7}{5}$, but we won't need them). Hence for $\rho=\rho_1$, equation \ref{eqn:third-term} becomes

\begin{equation}
\left({4-1 \choose 2} - {4 \choose 3}\frac{0}{1}\right) \frac{\frac{-3}{5}}{1 \cdot 7} = \frac{-9}{35}. 
\end{equation}

Similarly computations for $\rho=\rho_2, \rho_3, \rho_4$ yield $\frac{20}{49}, \frac{-9}{35}, \frac{-14}{25}$ respectively.

Hence the Euler characteristic $\chi(\ts{\Sigma})=\frac{129}{35} +
  \frac{24}{35} - \frac{9}{35} + \frac{20}{49} - \frac{9}{35} -
  \frac{14}{25} = \frac{4539}{1225}$.

\end{example}

\end{document}